\pgfplotsset{compat=1.9}
\newcommand{\precdot}{\mathrel{\prec\hspace{-5.5pt}\cdot}}
\def\R{\Bbb{R}}
\def\Z{\Bbb{Z}}
\newcommand\K{\mathcal{K}}
\newcommand\A{\mathcal{A}}
\renewcommand\P{\mathscr{P}}
\theoremstyle{plain}
\newtheorem{theorem}{Теорема}[section]
\newtheorem{lemma}[theorem]{Лемма}
\newtheorem{corollary}[theorem]{Следствие}
\newtheorem{proposition}[theorem]{Предложение}
\theoremstyle{definition}
\newtheorem{definition}[theorem]{Определение}
\newtheorem{example}[theorem]{Пример}
\newtheoremstyle{remark}
{}{}{}{}{\itshape}{}{ }{\thmname{#1}\thmnumber{ \itshape #2.}}
\theoremstyle{remark}
\newtheorem{remark}[theorem]{Замечание}
\DeclareMathOperator{\Aff}{Aff}
\def\@settitle{\begin{center}%
    \baselineskip14\p@\relax
    \bfseries
    \@title
  \end{center}%
}
\begin{document}

\title[]{О ЧИСЛЕ ГРАНЕЙ МЕЧЕНО-ПОРЯДКОВЫХ МНОГОГРАННИКОВ}
\author{Е. В. Мелихова}

\begin{abstract}
В работе предложен новый способ вычисления 
$f$-вектора мечено-порядкового многогранника.
А именно, для произвольного (полиэдрального) подразбиения произвольного выпуклого многогранника 
мы строим коцепной комплекс (над полем вычетов $\Z_2$), такой что размерности его когомологий совпадают с компонентами $f$-вектора исходного многогранника. Для мечено-порядкового многогранника и его известного кубо-сиплициального подразбиения этот коцепной комплекс удаётся описать чисто комбинаторно --- что и даёт вычисление $f$-вектора.  
Независимый интерес может представлять предложенное в работе комбинаторное описание вышеупомянутого кубо-симплициального подразбиения  (которое исходно было построено  геометрически).
\bigskip

\noindent\textsc{Abstract.}
In this paper, we present a new method for computing the $f$-vector of a marked order polytope.
Namely, given an arbitrary (polyhedral) subdivision of an arbitrary convex polytope,
we construct a cochain complex (over the two-element field $\Z_2$) such that the dimensions of its cohomology groups equal the components of the $f$-vector of the original polytope. In the case of a marked order polytope and its well-known cubosimplicial subdivision, this cochain complex can be described purely combinatorially --- which yields the said 
computation of the $f$-vector.
Of independent interest may be our combinatorial description of the said cubosimplicial subdivision (which was originally constructed geometrically).
\end{abstract}

\address{Мелихова Екатерина Владимировна,
Национальный исследовательский университет <<Высшая школа экономики>>,
Факультет математики,
ул. Усачёва, 6, Москва, 119048, Россия}
\address{Ekaterina V. Melikhova,
National Research University ``Higher School of Economics'',
Faculty of Mathematics,
Usacheva St., 6, Moscow, 119048, Russia}
\email{ekmelikhova86@gmail.com}
\keywords{Marked order polytope, $f$-vector, polyhedral complex.\\
{\it Ключевые слова:} мечено-порядковый многогранник, $f$-вектор, полиэдральный комплекс.}
\subjclass{52B05. УДК 514.172.45}
\maketitle

\section*{Введение}
Если $\P=(P,\preceq)$ -- конечное частично-упорядоченное множество с наименьшим и наибольшим элементами $\hat 0$ и $\hat 1$, то его {\it порядковый многогранник} $O(\P)$ лежит в конечномерном векторном пространстве $\R^P$ всех отображений $x$ из $P$ в прямую. В этом пространстве он задаётся  неравенствами $x(p) \leq x(q)$ для всех $p$, $q$ из $P$, таких что $p \preceq q$, а также равенствами  $x(\hat 0)=0$, $x(\hat 1)=1$. В работе \cite{S} Р. Стенли кроме прочего описал решётку граней многогранника $O(\P)$, а также <<каноническую>> триангуляцию этого многогранника. 

Ф.\ Ардила, Т.\ Блим и Д.\ Салазар в работе \cite{ABS} рассмотрели обобщение многогранника $O(\P)$, построенное по конечному частично-упорядоченному множеству $\P$ с отмеченными элементами (включающими все экстремальные элементы $\P$), на которых задана сохраняющая порядок функция $\lambda$. Таким образом появилось понятие {\it мечено-порядкового многогранника} $O(\P,\lambda)$ (см.\ определение \ref{MOP}). 
Одним из важных примеров мечено-порядковых многогранников являются многогранники Гельфанда--Цетлина.

Мы изучаем комбинаторику мечено-порядковых многогранников.
В частности, хотим описать $f$-вектор такого многогранника (т.е.\ конечную последовательность $(f_0,f_1, ...,f_n)$, где $f_i$ --- число $i$-мерных граней, $n$ --- размерность многогранника).
Для решения этой задачи иногда удобнее работать с соответствующей $f$-вектору производящей функцией 
$f(t)=f_0+f_1\cdot t+\dots+ f_n \cdot t^n$, получившей название $f$-\textit{многочлена}.
В случае многогранников Гельфанда--Цетлина корейскими математиками \cite{BYJ}, а также независимо автором настоящей работы \cite{ME} было найдено рекуррентное соотношение на $f$-многочлен.
Случай вершин был рассмотрен уже в работе \cite{GKT} (см.\ также \cite{LMcA}).


В случае произвольных мечено-порядковых многогранников для вычисления $f$-век\-то\-ра возможен <<лобовой>> подход, основанный на описании решётки граней $O(\P,\lambda)$, которое дал К. Пегель \cite{CP}. 
В настоящей работе предлагается другой, менее очевидный подход.
В \S\ref{complex-section} для произвольного (полиэдрального) подразбиения $\K$ произвольного выпуклого многогранника $M$ 
мы строим коцепной комплекс $C^*_\K$ (над $\Z_2$), такой что размерности его когомологий совпадают с компонентами $f$-вектора многогранника $M$ (см.\ лемму \ref{lem1}).
В случае мечено-порядковых многогранников эту конструкцию удаётся довести до вычисления $f$-вектора.
А именно, у мечено-порядкового многогранника $O(\P,\lambda)$ имеется {\it кубо-симплициальное} подразбиение $\K_{\P,\lambda}$ (каждый элемент этого подразбиения 
является произведением симплексов), которое обобщает <<каноническую>> триангуляцию Стенли порядкового многогранника $O(\P)$.
Для многогранника Гельфанда--Цетлина это кубо-симплициальное подразбиение построил А. Постников \cite{Pos}*{proof of Theorem 15.1}, а в общем случае --- Р. Лиу, К. Месарош и Э. Сен-Дизье \cite{LMS}*{proof of Theorem 3.4 (см.\ также \S5.2)}.
Из этих работ можно извлечь и комбинаторное описание граней старшей размерности подразбиения $\K_{\P,\lambda}$.
В \S\ref{KPlambda-section} мы описываем грани {\sl всех} размерностей подразбиения $\K_{\P,\lambda}$ в чисто комбинаторных терминах: они соответствуют некоторым цепям вложенных идеалов $\P$ (см.\ предложение \ref{KP} и следствие \ref{KP_OPlambda}). 
Пользуясь этим описанием $\K_{\P,\lambda}$, в \S\ref{fvector-section} мы даём комбинаторное описание коцепного комплекса $C^*_{\K_{\P,\lambda}}$. Полученное комбинаторное описание $C^*_{\K_{\P,\lambda}}$ позволяет вычислить когомологии этого коцепного комплекса, размерности которых, согласно лемме \ref{lem1}, совпадают с компонентами $f$-вектора многогранника $O(\P,\lambda)$.   
Таким образом, мы получаем некоторый новый способ вычисления $f$-вектора мечено-порядкового многогранника $O(\P,\lambda)$ (см.\ теорему \ref{main}).

\section{Многогранник и коцепной комплекс, который <<считает>> его $f$-вектор}\label{complex-section}
Относительную границу многогранника $Q$, то есть его границу в собственной аффинной оболочке, будем обозначать через $\partial Q$, а его относительную внутренность --- через  $\mathring Q$.

В основном следуя \cite{G}, дадим несколько определений.
 
\begin{definition} \label{complexdef}
\textit{Полиэдральным комплексом}  $\K$ называется конечное семейство непустых полиэдров, лежащих в некотором евклидовом пространстве и удовлетворяющих следующим условиям:
\begin{enumerate}
\item любая грань\footnote{Пустое множество мы не считаем гранью.} полиэдра из $\K$ снова принадлежит $\K$;
\item пересечение любых двух полиэдров $M_1, M_2 \in \K$ либо пусто, либо является гранью каждого из них.
\end{enumerate}
Множество $|\K|=\bigcup_{M \in \K} M$ называется \textit{телом} полиэдрального комплекса $\K$.
\end{definition}

\begin{definition}
\textit{Комплексом $\K(M)$ многогранника} $M$ называется полиэдральный комплекс его граней.
\end{definition}

\begin{definition}
Полиэдральный комплекс $\K'$ называется \textit{подразбиением} полиэдрального комплекса $\K$, если $|\K'|=|\K|$ и каждый полиэдр комплекса $\K'$ содержится в некотором полиэдре комплекса $\K$.
\end{definition}

\begin{definition}
\textit{Подразбиением} выпуклого многогранника $M$ называется полиэдральный комплекс $\K$, тело которого совпадает с многогранником $M$ (условие, что каждый многогранник комплекса $\K$ содержится в некоторой грани многогранника $M$ в данном случае выполнятся автоматически). 
\end{definition}

Нам также понадобится понятие \textit{подкомплекса}.

\begin{definition}
Подмножество полиэдрального комплекса называется \textit{подкомплексом}, если оно само является полиэдральным комплексом.
\end{definition}


Пусть $M$ --- произвольный выпуклый многогранник, $\K'$ --- некоторое подразбиение комплекса $\K(M)$. Заметим, что $\K'$ естественным образом наделяет многогранник $M$ структурой $CW$-комплекса. Открытыми клетками будут относительные внутренности многогранников из $\K'$. Далее мы построим некоторый коцепной комплекс $C_{\K'}^*$ с коэффициентами в поле $\Z_2$, такой, что размерность $H^n(C_{\K'}^*)$ совпадает с $n$-ой компонентой $f$-вектора многогранника $M$.

Пусть $A$ --- произвольная грань многогранника $M$.  Рассмотрим подкомплекс $\A'$ комплекса $\K'$, состоящий из всех многогранников $S\in \K'$, целиком лежащих в $A$. Другими словами $\A'=\{ S \in \K' \mid \mathring S\cap A \neq \emptyset\}$. А также рассмотрим подкомплекс $\partial \A'$ комплекса $\A'$, состоящий из всех многогранников $T\in \A'$, целиком лежащих в $\partial A$. 

Пусть  $C^*(\A', \partial \A'; \Z_2)$ ---  комплекс клеточных коцепей пары $CW$-комплексов, заданной парой полиэдральных комплексов $(\A', \partial \A')$, с коэффициентами в $\Z_2$. Возьмём прямую сумму таких комплексов коцепей по всем граням многогранника $M$:

\begin{definition}\label{def}
\begin{equation}
C_{\K'}^*:= \bigoplus_{A\in \K(M)}C^*(\A', \partial \A'; \Z_2).
\end{equation}
\end{definition} 

Заметим, что группы коцепей комплекса $C_{\K'}^*$ такие же, как у $C^*(\K'; \Z_2)$, но кограничные гомоморфизмы у них разные.
 
\begin{lemma} \label{lem1}
$\dim H^n(C_{\K'}^*)=f_n(M)$.
\end{lemma}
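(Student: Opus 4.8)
The plan is to compute $H^*(C_{\K'}^*)$ by exploiting the direct-sum decomposition in Definition \ref{def}, i.e. to show that each summand $C^*(\A', \partial\A'; \Z_2)$ — which is the relative cellular cochain complex of the pair $(A, \partial A)$ with respect to the CW-structure induced by $\K'$ — computes the relative cohomology $H^*(A, \partial A; \Z_2)$, and then to invoke the fact that $A$ is a closed convex polytope and $\partial A$ its relative boundary. Since a $d$-dimensional convex polytope $A$ is homeomorphic to the closed ball $B^d$ with $\partial A$ corresponding to $S^{d-1}$, one has $H^k(A, \partial A; \Z_2) \cong \widetilde H^{k-1}(S^{d-1}; \Z_2)$, which is $\Z_2$ when $k = d$ and $0$ otherwise. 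Therefore the $A$-summand contributes exactly one copy of $\Z_2$ in degree $\dim A$, and summing over all faces $A \in \K(M)$ gives $\dim H^n(C_{\K'}^*) = \#\{A \in \K(M) : \dim A = n\} = f_n(M)$.

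The one genuine point to justify is that the summand really does compute the relative cohomology of the pair. Here I would argue as follows. The subcomplex $\A'$ consists of all polytopes of $\K'$ lying inside $A$; since $\K'$ subdivides $\K(M)$ and $A$ is a face of $M$, the body $|\A'|$ equals $A$, and the CW-structure it carries is exactly the one $\K'$ induces on $A$. Likewise $|\partial\A'| = \partial A$. The cellular cochain complex of a CW-pair computes the relative singular cohomology of that pair — this is the standard comparison theorem for CW-complexes, and it applies here because polyhedral subdivisions of convex polytopes are regular CW-complexes. Hence $H^*\big(C^*(\A', \partial\A'; \Z_2)\big) \cong H^*(A, \partial A; \Z_2)$. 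One should also note, as the text already remarks, that the groups of $C_{\K'}^*$ coincide with those of $C^*(\K'; \Z_2)$ but the coboundary is reassembled face-by-face; this reassembly is precisely what turns the single cochain complex of $\K'$ into the direct sum of the relative complexes, and no cross-terms between distinct faces $A$ appear because a cell $\mathring S$ lies in the relative interior of exactly one face of $M$.

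The main obstacle — really the only subtlety worth checking carefully — is the bookkeeping that the coboundary of $C_{\K'}^*$ respects the direct-sum splitting indexed by faces of $M$. Concretely: each open cell $\mathring S$ (for $S \in \K'$) lies in $\mathring A$ for a unique face $A$ of $M$, so the cochain groups split canonically over $A \in \K(M)$; one must verify that the coboundary $\delta$ of $C_{\K'}^*$, as defined in Definition \ref{def}, sends the $A$-summand into itself, i.e. that it is block-diagonal with blocks the relative coboundaries $\delta_{(\A',\partial\A')}$. This is immediate from the definition, since $C_{\K'}^*$ is by construction the direct sum of the complexes $C^*(\A',\partial\A';\Z_2)$, but it is the place where one checks that this definition is consistent with having fixed cochain groups. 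Once that is in hand, the cohomology of a direct sum is the direct sum of cohomologies, and the polytope-ball computation above finishes the proof. I expect the write-up to be short: identify the bodies $|\A'| = A$ and $|\partial\A'| = \partial A$, cite the CW-comparison theorem, cite $H^*(B^d, S^{d-1}; \Z_2)$, and sum.
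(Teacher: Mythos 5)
Your proposal is correct and follows essentially the same route as the paper: decompose $H^n(C_{\K'}^*)$ as the direct sum over faces $A$ of the relative cohomologies $H^n(\A',\partial\A';\Z_2)$, identify each with the cohomology of the ball--sphere pair $(B^{\dim A}, S^{\dim A -1})$ (the paper phrases this via the quotient $\A'/\partial\A'\simeq S^{\dim A}$), and count one $\Z_2$ in degree $\dim A$ per face. Your extra remarks --- that $|\A'|=A$, $|\partial\A'|=\partial A$, and that the coboundary is block-diagonal over the faces because each open cell lies in the relative interior of a unique face --- are points the paper leaves implicit, and they are verified correctly.
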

\begin{proof}
Заметим, что в силу определения \ref{def} справедлива формула
\begin{equation}\label{dsumcoh}
H^n(C_{\K'}^*)=\bigoplus_{A\in \K(M)}H^n (\A', \partial \A'; \Z_2).
\end{equation} 
Так как $(\A', \partial \A')$ есть конечная $CW$-пара, то 
\begin{equation}\label{iso}
H^*(\A', \partial \A';\Z_2)\simeq \widetilde{H}^*(\A'/\partial \A';\Z_2),
\end{equation}
где $\A'/\partial \A'$ --- это $CW$-комплекс,  полученный из $\A'$ стягиванием в точку  $|\partial \A'|$. 
Заметим, что  
\begin{equation}\label{sphere}
\widetilde{H}^n(\A'/\partial \A';\Z_2)\simeq \widetilde{H}^n(S^{\dim A};\Z_2)\simeq
\begin{cases}
\Z_2,\ &\text{если}\ n=\dim A,\\
0,\ &\text{иначе}.
\end{cases}
\end{equation}
Из  (\ref{iso}) и (\ref{sphere}) следует, что 
\begin{equation}\label{dim}
\dim\widetilde{H}^n(\A', \partial \A';\Z_2)=
\begin{cases}
1,\ &\text{если}\ n=\dim A,\\
0,\ &\text{иначе}.
\end{cases}
\end{equation}
Из (\ref{dsumcoh}), (\ref{dim}) следует, что  $\dim H^n(C_{\K'}^*)=\#\big\{A\mid A\in \K(M),\ \dim A=n\big\}=f_n(M).$
\end{proof}

\begin{example}\label{firstexample}

Пусть $M$ --- треугольник $V_0 V_1V_2$ (см.\ рис.\ \ref{trsubdiv}а). В качестве  $\K'$ возьмём подразбиение (см.\ рис.\ \ref{trsubdiv}б). Построим в этом случае коцепной комплекс $C_{\K'}^*$ и вычислим вручную его когомологии.

\begin{figure}[h]
\begin{minipage}[h]{0.49\linewidth}
\center{\includegraphics[width=0.45\linewidth]{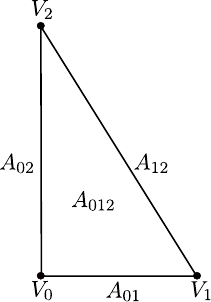} \\ а)}
\end{minipage}
\hfill
\begin{minipage}[h]{0.49\linewidth}
\center{\includegraphics[width=0.45\linewidth]{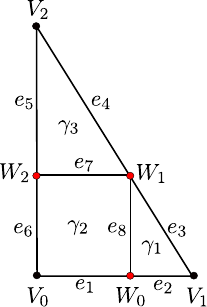} \\ б)}
\end{minipage}
\caption{а) $M$ --- треугольник $V_0V_1V_2$, б)   подразбиение $\K'$ комплекса $\K(M)$.}
\label{trsubdiv}
\end{figure}

На рис. \ref{trsubdiv} для рёбер и двумерной грани треугольника $V_0V_1V_2$, а также для относительных внутренностей многогранников  подразбиения $\K'$ введены специальные обозначения, которые мы далее будем использовать.

Действуя по определению \ref{def}, для каждой грани треугольника $V_0 V_1V_2$ построим комплекс клеточных коцепей соответствующей пары, то есть выпишем группы коцепей, а также кограничные гомоморфизмы. Внесём эти данные построчно в таблицу, чтобы потом взять нужную прямую сумму. Далее для клетки $\omega$ через $\omega^*$ обозначена базисная коцепь, принимающая на $\omega$ значение $1$ и ноль на остальных клетках. Прочерки в ячейках уже упомянутой таблицы соответствуют нулевым кограничным гомоморфизмам.

\[\begin{array}{|c|c|c|c|c|c|}
\hline
\text{грань}&C^0(\A', \partial \A')&C^1(\A', \partial \A')&C^2(\A',\partial\A')&\delta^0&\delta^1\\
\hline
V_0&\langle V_0^*\rangle&0&0&-& -\\
\hline
V_1&\langle V_1^*\rangle&0&0&-& -\\
\hline
V_2&\langle V_2^*\rangle&0&0&-& -\\
\hline
A_{01}&\langle W_0^*\rangle&\langle e_1^*,\ e_2^* \rangle&0&\delta^0(W_0^*)=e_1^*+e_2^*&-\\
\hline
A_{12}&\langle W_1^*\rangle&\langle e_3^*,\ e_4^* \rangle&0&\delta^0(W_1^*)=e_3^*+e_4^*&-\\
\hline
A_{02}&\langle W_2^*\rangle&\langle e_5^*,\ e_6^* \rangle&0&\delta^0(W_2^*)=e_5^*+e_6^*&-\\
\hline
A_{012}&0&\langle e_7^*,\ e_8^* \rangle& \langle \gamma_1^*,\ \gamma_2^*,\ \gamma_3^* \rangle&-&\delta^1(e_7^*)=\gamma_2^*+\gamma_3^*\\
&&&&&\delta^1(e_8^*)=\gamma_1^*+\gamma_2^*\\
\hline
\end{array}\]

Осталось взять прямую сумму комплексов $C^*(\A',\partial\A')$ по всем граням треугольника $V_0V_1V_2$ и увидеть, что $C^0_{\K'}\simeq\langle V_0^*,V_1^*,V_2^*,W_0^*,W_1^*,W_2^*\rangle\simeq C^0(\K';\Z_2)$, 
$C^1_{\K'}\simeq\langle e_1^*,\dots, e_8^*\rangle\simeq C^1(\K';\Z_2)$ и $C^2_{\K'}\simeq\langle \gamma_1^*,\gamma_2^*,\gamma_3^*\rangle\simeq C^2(\K';\Z_2)$  (как и следовало из определения \ref{def}), но кограничные гомоморфизмы ненулевые только на базисных коцепях, двойственных тем клеткам, которые лежат в относительной внутренности некоторой грани треугольника размерности строго большей, чем размерность самой клетки.

\[
H^0(C_{\K'}^*)\simeq \ker \delta^0\simeq \langle V_0^*,V_1^*,V_2^*\rangle \simeq \Z_2^3,
\]
\[
H^1(C_{\K'}^*) \simeq\langle  e_1^*,\dots, e_6^*\rangle/\langle e_1^*+e_2^*,\ e_3^*+e_4^*,\ e_5^*+e_6^*\rangle \simeq \Z_2^3,
\]
\[
H^2(C_{\K'}^*)\simeq\langle \gamma_1^*,\ \gamma_2^*,\ \gamma_3^* \rangle/\langle \gamma_1^*+\gamma_2^*,\ \gamma_2^*+\gamma_3^* \rangle \simeq \Z_2.
\]

Таким образом, размерности групп когомологий коцепного комплекса $C_{\K'}^*$ совпадают с количеством граней соответствующих размерностей треугольника $M$.

\end{example}

\section{Мечено-порядковый многогранник и его кубо-симплициальное разбиение} \label{KPlambda-section}
{\bfseries\scshape Шаг 2.}
Пусть $\P=(P,\preceq)$ --- конечное частично упорядоченное множество (далее --- чум), $P^*$ --- подмножество $P$, содержащее все экстремальные элементы и 
$\P^*=(P^*,\preceq)$ --- заданный им подчум $\P$ (далее --- подчум отмеченных элементов), $\lambda$ --- сохраняющая порядок функция $\P^*\to\R$.
Векторное пространство $\R^P$ функций $P\to\R$ изоморфно координатному пространству $\R^{\#P}$; значение функции $x\in\R^P$ на элементе $p\in P$
отождествляется с $p$-ой координатой $x_p$.

\begin{definition}\label{MOP}
\textit{Мечено-порядковым многогранником} $O(\P,\lambda)$ называется множество точек $x\in \R^P$, таких что $x_p\leq x_q$, если $p\preceq q$, и $x_a=\lambda(a)$, если $a\in P^*$.
\end{definition}

\begin{example}\label{oplambda}
Пусть $P=\{r,p,q,s,t\}$. Отношение частичного порядка $\preceq$ зададим накрывающими соотношениями: $r \precdot p \precdot q \precdot s$ и $p \precdot t$ (рис. \ref{mop1}а). В качестве $P^*$ выберем трёхэлементное подмножество $\{r, s, t\}$. Сохраняющую порядок функцию $\lambda: \P^* \rightarrow \R$ определим равенствами: $\lambda (r)=0$, $\lambda(s)=2$ и $\lambda(t)=1$ (рис. \ref{mop1}б). Выпишем систему неравенств, определяющую многогранник $O(\P, \lambda)$ и построим его проекцию на плоскость $Ox_px_q$ (рис. \ref{mop1}в).

Согласно определению (\ref{MOP}), 
\[
O(\P,\lambda)=
\begin{cases}
0=x_r \le x_q  \le x_s=2,\\
x_p \le x_t= 1.\\
\end{cases}
\]

\end{example}

\begin{figure}[h]
\begin{minipage}{0.31\linewidth}
\center{\includegraphics[width=0.31\linewidth]{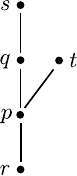} \\ а)}
\end{minipage}
\begin{minipage}{0.31\linewidth}
\center{\includegraphics[width=0.31\linewidth]{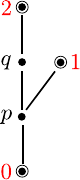} \\ б)}
\end{minipage}
\hfill
\begin{minipage}{0.31\linewidth}
\center{\includegraphics[width=\linewidth]{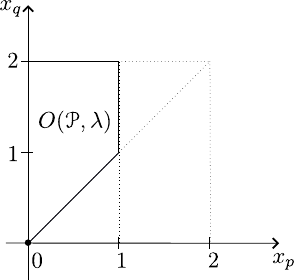} \\ в)}
\end{minipage}
\caption{а) Диаграмма Хассе чума $\P$ из примера \ref{oplambda}; б) диаграмма Хассе чума $\P$ из примера \ref{oplambda}, в которой выделены элементы $P^*$ и указаны значения функции $\lambda$ на них; в) проекция $O(\P,\lambda)$ на координатную плоскость $Ox_px_q$.}
\label{mop1}
\end{figure} 

\begin{definition}
Полиэдральный комплекс, в котором каждый полиэдр является произведением симплексов, назовём \textit{кубо-симплициальным комплексом}.
\end{definition}

В этом разделе мы  покажем, что мечено-порядковый многогранник $O(\P,\lambda)$ (см. определение \ref{MOP})  обладает некоторым каноническим подразбиением $\K_{\P,\lambda}$, приходящим из структуры чума $\P$. Мы дадим комбинаторное и геометрическое описание подразбиения $\K_{\P,\lambda}$. А также убедимся в том, что $\K_{\P,\lambda}$ является \textit{кубо-симплициальным комплексом}.

Чтобы построить подразбиение $\K_{\P,\lambda}$ многогранника $O(\P,\lambda)$, нам понадобится следующее стандартное определение.

\begin{definition}\label{ideal}
\textit{Порядковым идеалом} чума $\P=(P, \preceq)$ называется подмножество $I\subset P$ такое, что если $x\in I$ и $y\preceq x$ , то $y\in I$.
\end{definition}
\begin{remark}
Порядковые идеалы чума $\P$ также образуют чум по включению. 
\end{remark}

Пусть $\P$, $\P^*$ и $\lambda$ --- это участвующие  в определении $O(\P,\lambda)$ чум, подчум отмеченных элементов и сохраняющая порядок функция из $\P^*$ в $\R$ соответственно. 
\begin{definition}\label{L-lambda}
Цепь $L$ в чуме порядковых идеалов чума $\P$ будем называть \textit{$\lambda$-допустимой} или просто  \textit{допустимой} (когда $\lambda$ ясна из контекста), если она имеет вид $\emptyset=I_0 \subsetneq I_1 \subsetneq  \cdots \subsetneq I_m\subsetneq I_{m+1}=P$ и удовлетворяет условию 
\smallskip

\begin{equation}\label{*}
\begin{aligned}
&\text{ каждое
$\lambda ((I_i \setminus I_{i-1}) \cap P^*)$ пусто или равно $\{t_i\}$ для некоторого $ t_i\in \R$,}\\
&\text{ причём $t_i <  t_j$ при условии, что  $i < j$ и как $t_i$, так и $t_j$ определены.
}
\end{aligned}
\end{equation}
\smallskip

Условие (\ref{*})  означает следующее. Представьте, что мы строим  пирамиду из кубиков --- элементов $P$,  в которой этажи --- это разности между соседними идеалами $I_i$.  Каждый этаж состоит по крайней мере из одного кубика.  Некоторые из наших кубиков являются отмеченными (это элементы $P^*$), на них стоит метка-число (значение функции $\lambda$). Тогда условие (\ref{*}) означает, что отмеченные кубики с одинаковыми метками должны попадать в один и тот же этаж, а также метки добавляемых кубиков должны возрастать от этажа к этажу.
\end{definition}

Пусть теперь $L$ ---  цепь вида $\emptyset=I_0 \subsetneq I_1 \subsetneq  \cdots \subsetneq I_m\subsetneq I_{m+1}=P$  в чуме порядковых идеалов чума $\P$. 
\begin{definition}\label{FL}
Будем обозначать через $F_L=F_L(\lambda)$ множество точек $x\in \R^P$, для которых выполнены три условия:

\begin{equation}\label{condition}
\begin{aligned}
\bullet \hspace{1.7mm} &x|_{P^*}=\lambda,\\
\bullet\hspace{1.7mm} &\text{функция $x:P\rightarrow \R$ постоянна на множествах  $I_1\setminus I_0, \dots, I_{m+1}\setminus I_m$,} \hspace{1.5cm}\\
\bullet\hspace{1.7mm}   &x(I_1) \leq x(I_2\setminus I_1) \leq \cdots \leq x(I_m\setminus I_{m-1}) \leq x(P\setminus I_m).
\end{aligned} 
\end{equation}
\end{definition}

\begin{lemma}\label{LmapstoFL}
Соответствие $L\mapsto F_L$ между допустимыми цепями чума порядковых идеалов чума $\P$ и непустыми множествами вида $F_L$ взаимнооднозначно.
\end{lemma}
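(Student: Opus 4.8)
The plan is to reduce everything to the combinatorics of ordered set partitions of $P$ and then to exhibit an explicit inverse of $L\mapsto F_L$. First I would record the standard dictionary: a chain $\emptyset=I_0\subsetneq I_1\subsetneq\cdots\subsetneq I_{m+1}=P$ of order ideals is the same datum as the ordered partition $(P_1,\dots,P_{m+1})$ of $P$ with $P_k=I_k\setminus I_{k-1}$, and the ordered partitions that arise are exactly those for which every prefix $P_1\cup\cdots\cup P_k$ is an order ideal. In these terms $F_L$ consists of the $x\in\R^P$ that are constant on each block $P_k$, agree with $\lambda$ on $P^*$, and whose block values $c_1\le\cdots\le c_{m+1}$ are weakly increasing. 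If $P_k$ meets $P^*$, then constancy on $P_k$ together with $x|_{P^*}=\lambda$ forces $\lambda$ to be constant on $P_k\cap P^*$ with value $c_k$; call such a block \emph{marked}, of \emph{level} $c_k$. Two observations follow. (i)~$F_L\neq\emptyset$ iff $\lambda$ is constant on $P_k\cap P^*$ for every marked block and the levels of the marked blocks, listed in order, are weakly increasing --- unmarked blocks can then be filled in by interpolation; in particular $F_L\neq\emptyset$ for every admissible $L$, by giving each unmarked block the level of the nearest preceding marked block. (ii)~Since $P^*$ contains all extremal elements, $P_1$ and $P_{m+1}$ are always marked. Comparing (i) with Definition~\ref{L-lambda}, admissibility of $L$ says precisely that $F_L\neq\emptyset$ and the levels of the marked blocks are \emph{strictly} increasing.

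Next I would build the candidate inverse $\Psi$ on nonempty sets of the form $F_L$. Given such an $F$, put $p\sim_F q$ iff $x_p=x_q$ for all $x\in F$. If $L$ is any chain with $F_L=F$, then each $\sim_F$-class is a union of \emph{consecutive} blocks of $L$ (equality of two block values throughout $F$ propagates to the blocks in between, since $c_1\le\cdots\le c_{m+1}$), so the classes are intervals of blocks; order them accordingly as $\bar P_1,\dots,\bar P_s$. Then each $\bar P_1\cup\cdots\cup\bar P_k$ is a prefix union for $L$, hence an order ideal, so $\Psi(F):=(\emptyset\subsetneq\bar P_1\subsetneq\bar P_1\cup\bar P_2\subsetneq\cdots\subsetneq P)$ is a chain of order ideals. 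Now $F_{\Psi(F)}=F$: the inclusion $F\subseteq F_{\Psi(F)}$ is immediate since every $x\in F$ is constant on each $\sim_F$-class, and conversely for $x\in F_{\Psi(F)}$ the values along the blocks of $L$ are weakly increasing (constant within each $\bar P_j$, nondecreasing from each $\bar P_j$ to the next), so $x\in F_L=F$. And $\Psi(F)$ is admissible: its marked blocks are the classes meeting $P^*$, with honest numerical levels, and if two consecutive classes had equal value throughout $F$ they would form a single class --- hence the class values, a fortiori the levels of the marked classes, strictly increase.

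Finally I would prove $\Psi(F_L)=L$ for admissible $L$; together with $F_{\Psi(F)}=F$ this shows $L\mapsto F_L$ is a bijection from admissible chains onto nonempty sets of the form $F_L$, with inverse $\Psi$. The content is that for admissible $L$ the classes of $\sim_{F_L}$ are exactly the blocks of $L$, i.e.\ no merging occurs; the order of the classes then agrees with the order of the blocks because $c_i\le c_j$ for $i<j$ throughout $F_L$. To see that no two consecutive blocks $P_k,P_{k+1}$ are forced to share a value: if both are marked this contradicts strict monotonicity of levels; and if at least one is unmarked, then walking out to the nearest marked blocks on either side (they exist since $P_1$ and $P_{m+1}$ are marked) those two levels are distinct by admissibility, which leaves room to vary the value on the unmarked block while keeping $c_1\le\cdots\le c_{m+1}$ valid --- so the two blocks do not merge. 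This last step, where the \emph{strict} inequality in the definition of admissibility is exactly what makes unmarked blocks ``flexible'', is the only place a real argument is needed; the rest is bookkeeping with ordered partitions.
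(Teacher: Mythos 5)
Your proof is correct and rests on the same key observation as the paper's own (much terser) argument: two adjacent blocks of an admissible chain cannot have their values forced equal throughout $F_L$, because that would require both blocks to be marked with the same level, contradicting the strict monotonicity in condition (\ref{*}). You are more thorough than the paper, which declares surjectivity obvious and leaves the nonemptiness of $F_L$ and the recovery of $L$ from $F_L$ implicit, but your explicit inverse $\Psi$ and the ``flexibility of unmarked blocks'' step are just careful elaborations of the same idea, not a different route.
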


\begin{proof}
Сюръективность очевидна. Докажем инъективность. 
Предположим, что различным допустимым цепям $L$ и $L'$ соответствуют одинаковые множества $F_L=F_{L'}$. Это может случиться только в случае, если хотя бы в одной из рассматриваемых цепей (например в $L$)  найдутся разности соседних идеалов $I_k\setminus I_{k-1}$ и $I_{k+1}\setminus I_k$, на которых функции $x\in F_L$ принимают одно и то же постоянное значение, то есть $x(I_k\setminus I_{k-1})=x(I_{k+1}\setminus I_k)=r$. Последнее возможно лишь в случае, когда существуют отмеченные элементы $a,b\in P^*$, такие что $a\in I_k\setminus I_{k-1}$, $b\in I_{k+1}\setminus I_k$ и $\lambda(a)=\lambda(b)=r$. Но это противоречит допустимости цепи $L$ (условию (\ref{*}) нарушено). Поэтому инъективность доказана. 

\end{proof}

\begin{remark}\label{xsavesorder}
Если функция $x$ содержится в $F_L(\lambda)$ для некоторой $\lambda$-допустимой цепи $L$ чума порядковых идеалов чума $\P$, то $x$ --- сохраняющая порядок функция  $\P \to \R$, продолжающая $\lambda$, в частности $x\in O(\P,\lambda)$.
\end{remark}


\begin{example}\label{ex1}
Пусть $\P$, $\P^*$ и $\lambda$ --- как в примере \ref{oplambda}.

Рассмотрим цепь $L_1=\Big( \emptyset \neq \{r\} \subsetneq  \{r,p\} \subsetneq \{r,p,t\} \subsetneq \{r,p,t,q\} \subsetneq P \Big)$ в чуме порядковых идеалов чума $\P$. 
Из следующей таблицы видно, что она удовлетворяет условию (\ref{*}).
\[\begin{array}{|c|c|c|c|c|c|}
\hline
S   
&\{r\}&\{p\}&\{t\}&\{q\}&\{s\}\\
\hline
\lambda(S\cap P^*)
&\{0\}&\emptyset&\{1\}&\emptyset&\{2\}\\
\hline
\end{array}\]

Соответствующее подмножество $F_{L_1} \subset\R^5$ состоит из таких точек $x$, что $0=x_r \leq x_p \leq x_t=1 \leq x_q \leq x_s=2$. Проекция $F_{L_1}$ на плоскость $Ox_px_q$ является квадратом $0 \leq x_p \leq 1 \leq x_q \leq 2$.

Покажем, что цепь $L=\Big(\{r\} \subsetneq  \{r,p\} \subsetneq \{r,p,q\}\Big)$ не является допустимой. В самом деле, $\lambda((P \setminus\{r,p,q\})  \cap P^*)=\lambda(\{s,t\} \cap P^*)=\{1,2\}$, что противоречит условию (\ref{*}). 

Если мы рассмотрим соответствующее подмножество $F_L\subset \R^5$, то увидим, что оно окажется пустым,  так как по определению должно состоять из  таких точек $x$, что $0=x_r \leq x_p \leq x_q \leq x_t=1=2=x_s$.
\end{example}

\begin{proposition}\label{KP}
Совокупность \[\K_{\P,\lambda}:=\{ F_L\mid L\text{ --- $\lambda$-допустимая цепь в чуме порядковых идеалов }\P\}\] является полиэдральным комплексом.
\end{proposition}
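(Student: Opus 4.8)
The plan is to verify the two conditions of Definition~\ref{complexdef} for the family $\K_{\P,\lambda}$ (which is finite, since $\P$ is), recording along the way the face structure of each individual $F_L$. The first point is that, for an admissible chain $L=(\emptyset=I_0\subsetneq I_1\subsetneq\cdots\subsetneq I_m\subsetneq I_{m+1}=P)$, the set $F_L$ is a nonempty polytope, and in fact a product of simplices. Writing $B_j:=I_j\setminus I_{j-1}$ for the blocks and, for $x\in F_L$, letting $c_j$ be the constant value of $x$ on $B_j$, the set $F_L$ is described by $c_1\le c_2\le\cdots\le c_{m+1}$ together with $c_j=t_j$ whenever $B_j\cap P^*\ne\emptyset$, where $\{t_j\}=\lambda(B_j\cap P^*)$. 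Since all minimal and all maximal elements of $\P$ lie in $P^*$, the indices $1$ and $m+1$ are ``pinned'', and by admissibility (condition~(\ref{*})) the pinned values are \emph{strictly} increasing; hence $F_L$ factors as a direct product, over the gaps between consecutive pinned indices, of order simplices $\{t\le c\le\cdots\le c\le t'\}$. In particular $F_L$ is a nonempty polytope, and it is bounded because $F_L\subseteq O(\P,\lambda)$ by Remark~\ref{xsavesorder}.

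Next I would describe the faces of $F_L$ so as to get condition~(1). For any chain $L'\subseteq L$ obtained by deleting some of $I_1,\dots,I_m$, constancy of $x$ on the (coarser) blocks of $L'$ amounts to $x\in F_L$ together with the linear equalities $c_i=c_{i+1}$ for each deleted $I_i$; thus $F_{L'}=F_L\cap\{c_i=c_{i+1}:I_i\text{ deleted}\}$, so whenever $F_{L'}\ne\emptyset$ it is a face of $F_L$. Conversely, since $F_L$ is a product of order simplices and every face of an order simplex $\{a=d_0\le d_1\le\cdots\le d_r\le d_{r+1}=b\}$ is cut out by equalities $d_i=d_{i+1}$, every face of $F_L$ has the form $F_{L'}$ for such an $L'$ with $F_{L'}\ne\emptyset$. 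Finally, by the surjectivity part of Lemma~\ref{LmapstoFL}, any nonempty set of the form $F_{L'}$ equals $F_M$ for a (unique) admissible chain $M$ — obtained from $L'$ by merging blocks carrying equal $\lambda$-values — and hence belongs to $\K_{\P,\lambda}$. This gives condition~(1).

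For condition~(2), let $L,L'$ be admissible with $F_L\cap F_{L'}\ne\emptyset$, and let $L\cap L'$ denote the chain of order ideals common to $L$ and $L'$ (it still runs from $\emptyset$ to $P$). As $L\cap L'\subseteq L$ and $L\cap L'\subseteq L'$, the inclusion $F_{L\cap L'}\subseteq F_L\cap F_{L'}$ follows from the preceding paragraph. For the reverse inclusion, take $x\in F_L\cap F_{L'}$; by Remark~\ref{xsavesorder} the function $x$ is order preserving, so every sublevel set $\{p\in P:x(p)\le v\}$ is an order ideal, and the distinct ones form a chain $L_x$ with $x\in F_{L_x}$. Since $x$ is nondecreasing along, and constant on, the blocks of $L$ (resp.\ of $L'$), each ideal of $L_x$ equals some $I_k\in L$ (resp.\ lies in $L'$); hence $L_x\subseteq L\cap L'$, so $x\in F_{L_x}\subseteq F_{L\cap L'}$. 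Therefore $F_L\cap F_{L'}=F_{L\cap L'}$, which is nonempty, hence lies in $\K_{\P,\lambda}$ by Lemma~\ref{LmapstoFL}, and is a face of both $F_L$ and $F_{L'}$ by the face description above. This gives condition~(2).

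I expect the main obstacle to be the bookkeeping in the face description of the second paragraph: matching faces of the product-of-simplices $F_L$ precisely with deletions of ideals from $L$ while correctly handling the pinned coordinates, and checking that a nonempty $F_{L'}$ for a possibly non-admissible subchain $L'$ is nonetheless one of the polytopes $F_M$ belonging to $\K_{\P,\lambda}$ (for which Lemma~\ref{LmapstoFL} is exactly what is needed). Once the sublevel-set chain $L_x$ is in hand, the intersection statement is comparatively short.
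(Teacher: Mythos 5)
Ваше доказательство корректно, но для условия (1) оно идёт по существенно иному пути, чем в статье. В статье непустота и полиэдральность каждого $F_L$ выводятся через $\lambda$-согласованные линейные расширения: максимальные $\lambda$-допустимые цепи $\widetilde{L}_\sigma$ дают клетки старшей размерности, всякая другая допустимая цепь вкладывается в некоторую $\widetilde{L}_\sigma$, и $F_L$ оказывается гранью $F_{\widetilde{L}_\sigma}$. Вы же раскладываете каждое $F_L$ напрямую в произведение порядковых симплексов между соседними <<пришпиленными>> блоками (содержащими отмеченные элементы), опираясь на то, что первый и последний блоки всегда пришпилены (они содержат экстремальные элементы, лежащие в $P^*$), а пришпиленные значения строго возрастают в силу условия (\ref{*}); это сразу даёт и непустоту, и описание всех граней $F_L$ как непустых $F_{L'}$ для подцепей $L'\subseteq L$ (совпадающее со следствием \ref{subchain}). Ваш вариант более самодостаточен и короче, но теряет побочный продукт рассуждения из статьи --- отождествление клеток старшей размерности с линейными расширениями, которое затем используется в следствии \ref{KP_OPlambda} и примерах \ref{Fsigma}, \ref{oldnewex}. Для условия (2) ключевая идея у вас та же, что в лемме \ref{FLcapFL'}: множества подуровня сохраняющей порядок функции $x$ --- порядковые идеалы, обязанные входить в каждую цепь, чьё $F$ содержит $x$; вы оформляете это позитивно через цепь $L_x$ подуровневых идеалов (откуда $L_x\subseteq L\cap L'$ и $x\in F_{L\cap L'}$), тогда как в статье рассуждение ведётся от противного. Небольшое избыточное место: для подцепи $L'$ допустимой цепи $L$ блоки с равными значениями $\lambda$ возникнуть не могут (значения $t_j$ строго возрастают), так что <<слияние блоков с равными метками>> при применении леммы \ref{LmapstoFL} на самом деле пусто --- непустое $F_{L'}$ для подцепи $L'\subseteq L$ автоматически отвечает уже допустимой цепи $L'$.
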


\begin{proof}
Покажем, что для $\lambda$-допустимой цепи $L$ в чуме порядковых идеалов чума $\P$ условия (\ref{condition}) задают систему непротиворечивых линейных неравенств и равенств в пространстве $\R^P$ и тем самым определяют непустой выпуклый полиэдр.

Пусть $P$ состоит из элементов $p_1,\ldots, p_l$.
Всякой сохраняющей порядок биекции $\sigma:\P \rightarrow \{1, 2,\ldots,l\}$ соответствует линейное расширение $\P_\sigma=(P,\preceq_\sigma)$ чума $\P$; а именно, $s\preceq_\sigma t$, если $\sigma(s)\le\sigma(t)$. 
Легко видеть, что это соответствие взаимно однозначно.
Биекция $\sigma$ задаёт перестановку $q_1,\ldots, q_l$ элементов 
$p_1,\ldots,p_l$, где $q_i=\sigma^{-1}(i)$. 

\begin{lemma}
Между множеством линейных расширений чума $\P$ и множеством  цепей максимальной длины (то есть длины  $l+1$) в чуме порядковых идеалов чума $\P$ существует взаимно однозначное соответствие.
\end{lemma}
\begin{proof}

Действительно, пусть $\P_\sigma$ --- линейное расширение чума $\P$. Поставим ему в соответствие  цепь $L_{\sigma}=\big(I_0 \subsetneq I_1 \subsetneq \ldots \subsetneq I_l\big)$, 
где  $I_{i}=\sigma^{-1}\big(\{1,2,\ldots,i\}\big)$. Обратно, пусть дана  цепь максимальной длины $L=\big(\emptyset= I_0 \subsetneq I_1 \subsetneq \ldots \subsetneq I_l=P\big)$.
Тогда каждая разность $I_{k+1} \setminus I_k$ есть одноэлементное множество $\{u\}$, и поскольку $I_k$ --- порядковый идеал, для любого элемента $v \in I_k$ либо $v \prec u$, либо $u$ и $v$ несравнимы. 
Поставим в соответствие цепи $L$ линейное расширение $\P_\tau$ чума $\P$ такое, что каждое $\tau(I_i\setminus I_{i-1})=\{i\}$.
\end{proof}

Сформулируем без доказательства достаточно очевидную лемму.
\begin{lemma}
Цепь $L_{\sigma}$ не является $\lambda$-допустимой в точности в следующих двух случаях. Случай 1: функция $\lambda$ не сохраняет линейный порядок $\preceq_\sigma$, то есть существует пара 
$\preceq$-несравнимых элементов $a,b \in P^*$ таких, что $a \preceq_\sigma b$, но $\lambda(a) > \lambda(b)$. Случай 2:  $\lambda$ сохраняет $\preceq_\sigma$, но существуют различные элементы из $P^*$, на которых $\lambda$ принимает одинаковые значения. 
\end{lemma}

Заметим, что в Случае 2 цепь $L_\sigma$ можно проредить, не меняя множества $F_{L_\sigma}$, так, чтобы она стала $\lambda$-допустимой. Действительно, предположим, что $q_k$ и $q_{k+n}$ из  $P^*$ таковы, что $\lambda(q_k)=\lambda(q_{k+n})$; тогда удалим из цепи $L_{\sigma}$ идеалы с $I_{k}$ по $I_{k+n-1}$. Повторим эту процедуру для каждой пары различных элементов из $P^*$, на которых $\lambda$ принимает одинаковые значения. В результате получим $\lambda$-допустимую цепь $\widetilde{L}_{\sigma}$.

Если цепь $L_{\sigma}$ является $\lambda$-допустимой, положим $\widetilde{L}_{\sigma}=L_\sigma$.
Таким образом, цепь $\widetilde{L}_{\sigma}$ определена для всех \textit{$\lambda$-согласованных} линейных расширений $\P_\sigma$ чума $\P$, то есть таких, что $\lambda$ сохраняет $\preceq_\sigma$.

Пусть $\P_\sigma$ --- $\lambda$-согласованное линейное расширение $\P$ и $q_i=\sigma^{-1}(i)$.
Тогда множество точек $F_{\widetilde{L}_{\sigma}}$ является многогранником в $\R^P$, заданным системой равенств и неравенств, получающейся из  системы $x_{q_1} \leq x_{q_2} \leq \ldots \leq x_{q_l}$ заменой некоторых знаков неравенства на знаки равенства и всех координат $x_{q_i}$ при $q_i \in P^*$ на числа $\lambda(q_i)$  в соответствии с условиями (\ref{condition}). 
Ограниченность $F_{\widetilde{L}_{\sigma}}$ следует из условия, что $P^*$ содержит все экстремальные элементы чума $\P$.

Итак, по каждому $\lambda$-согласованному линейному расширению $\P_\sigma$ чума $\P$ мы построили $\lambda$-допустимую цепь $\widetilde{L}_{\sigma}$, определяющую многогранник 
$F_{\widetilde{L}_{\sigma}}\in\K_{\P,\lambda}$. Любая другая $\lambda$-допустимая цепь $L$ в чуме порядковых идеалов чума $\P$ будет состоять из меньшего числа звеньев и может быть получена из $\widetilde{L}_{\sigma}$ для некоторого (возможно не единственного) линейного расширения $\P_\sigma$ чума $\P$ удалением некоторого набора идеалов. Поэтому соответствующее ей множество точек $F_L$ является многогранником, заданным системой неравенств и равенств, получающихся из системы, определяющей многогранник $F_{\widetilde{L}_{\sigma}}$, заменой некоторых знаков неравенства на знаки равенства в соответствии с условиями (\ref{condition}). Соблюдение условия (\ref{*}) гарантирует непротиворечивость полученной системы. А это, в частности,  означает, что $F_L$ является непустой гранью многогранника $F_{\widetilde{L}_{\sigma}}$. Таким образом, 
элементы $\K_{\P,\lambda}$ максимальной размерности всегда имеют вид $F_{\widetilde{L}_{\sigma}}$ для некоторого линейного расширения $\P_\sigma$ чума $\P$ (сравн.\ пример \ref{Fsigma}).

Покажем далее, что все грани многогранника $F_{\widetilde{L}_{\sigma}}$ принадлежат $\K_{\P,\lambda}$. Пусть $G$ --- непустая грань многогранника $F_{\widetilde{L}_{\sigma}}$. Тогда определяющая система неравенств и равенств для $G$ --- назовём её $S_G$ --- получается из системы, определяющей многогранник $F_{\widetilde{L}_{\sigma}}$, не приводящей к противоречию заменой некоторых знаков неравенства на знаки равенства. Заметим, что, как и раньше, система $S_G$  получается из  системы $x_{q_1} \leq x_{q_2} \leq \ldots \leq x_{q_l}$ заменой некоторых знаков неравенства на знаки равенства и всех координат $x_{q_i}$ при $q_i \in P^*$ на числа $\lambda(q_i)$. Выпишем цепь $L$, такую, что $F_L=G$,  следующим очевидным образом: в $I_1$ помещаем все элементы $P$, на которых функции $x$ из $G$ принимают значения, стоящие в системе $S_G$ до первого знака неравенства (движемся слева направо); в $I_2$ помещаем все элементы $P$, на которых функции $x$ из $G$ принимают значения, стоящие до второго знака неравенства и так далее. Построенная цепь будет удовлетворять условию (\ref{*}), иначе соответствующая система была бы противоречивой. 

Для завершения доказательства остаётся доказать следующую лемму.

\begin{lemma}\label{FLcapFL'}
В случае, если пересечение $\lambda$-допустимых цепей $L$ и $L'$ в чуме порядковых идеалов чума $\P$ является $\lambda$-допустимой цепью, то $F_{L}\cap F_{L'}\in \K_{\P,\lambda}$, а именно, $F_{L}\cap F_{L'} = F_{L\cap L'}$, причём $F_{L\cap L'}$  --- грань каждого из многогранников $F_L$ и $F_{L'}$;  а в случае, если пересечение $L\cap L'$  не является $\lambda$-допустимой цепью, то пересечение $F_L$ и $F_{L'}$ пусто.
\end{lemma}

\begin{proof}
Пусть пересечение $L\cap L'= \Big(\emptyset =J_0\subsetneq J_1\subsetneq\cdots\subsetneq J_k\subsetneq J_{k+1}=P\Big)$ --- $\lambda$-допустимая цепь. Заметим, что в каждой из цепей $L$ и $L'$ разность $J_r \setminus J_{r-1}$ (где $1\leq r\leq k+1$) подразбита своей цепочкой вложенных идеалов (см.\ рис.\ \ref{LcapL'}).
\begin{figure}[h]
\includegraphics[width=0.75\linewidth]{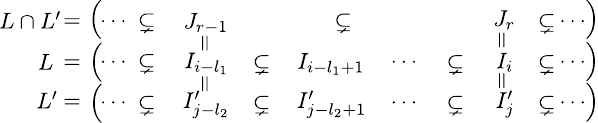}
\caption{Связь между цепями вложенных идеалов и их пересечением.}
\label{LcapL'}
\end{figure}

По определению \ref{FL} справедливы включения $F_{L\cap L'} \subset F_L$ и $F_{L\cap L'} \subset F_{L'}$, значит $F_{L\cap L'} \subset F_L \cap F_{L'}$ (это включение, в частности, гарантирует нам непустоту $F_L\cap F_{L'}$). Теперь докажем, что $F_{L}\cap F_{L'} \subset F_{L\cap L'}$. Пусть  функция  $x\in F_L \cap F_{L'}$, покажем, что $x\in F_{L\cap L'}$. Согласно (\ref{condition}) последнее равносильно условиям а) $x|_{P^*}=\lambda$, б) $x$ постоянна на множествах $J_1\setminus J_0, \dots, J_{k+1}\setminus J_k$ и в) $x(J_1) \leq x(J_2\setminus J_1) \leq \cdots \leq x(J_k\setminus I_{k-1}) \leq x(P\setminus J_k)$. Заметим, что условия а) и в) выполнены очевидным образом, так что остаётся проверить справедливость условия б). Предположим, что $x$ не постоянна на множествах $J_1\setminus J_0, \dots, J_{k+1}\setminus J_k$, то есть найдётся  разность соседних идеалов $J_r \setminus J_{r-1}$ цепи $L\cap L'$ такая, что функция $x$ принимает на ней по крайней мере два различных значения. Допустим, значения функции $x$ различны на элементах $p,q\in J_r \setminus J_{r-1}$, то есть $x_p=r_1\neq r_2=x_q$. Без потери общности можно предположить, что $r_1<r_2$. Так как $x\in F_L \cap F_{L'}$, то согласно замечанию \ref{xsavesorder}, $x$ --- сохраняющая порядок функция $\P \to \R$.
Заметим, что порядковые идеалы чума $\R$ --- это лучи $(-\infty, r)$, где $r$ --- некоторое действительное число. Рассмотрим порядковый идеал $(-\infty; r_1)$ чума $\R$, назовём его $I$. Полный прообраз этого идеала $x^{-1}(I)$ является идеалом чума $\P$, назовём его $J$ (полный прообраз порядкового идеала при монотонном отображении чумов является порядковым идеалом). По определению \ref{FL} идеал $J$ содержится как в цепи $L$, так и в цепи $L'$, а значит принадлежит их пересечению. Заметим, что $J_{r-1} \subsetneq J\subsetneq J_r$. Однако мы предполагали, что идеалы $J_{r-1}$ и $J_r$ в цепи $L\cap L'$ --- соседние. Это противоречие доказывает, что функция $x$ из $F_L\cap F_{L'}$ постоянна на множествах $J_1\setminus J_0, \dots, J_{k+1}\setminus J_k$. Мы показали, что любая функция $x$ из  $F_L\cap F_{L'}$ принадлежит также и $F_{L\cap L'}$, то есть $F_L\cap F_{L'} \subset F_{L\cap L'}$. Таким образом, если $L\cap L'$ --- $\lambda$-допустимая цепь, то  справедливо равенство $F_L\cap F_{L'}=F_{L\cap L'}$. Несложно понять, что $F_{L\cap L'}$ является гранью каждого из многогранников $F_L$ и $F_{L'}$.

Теперь пусть пересечение $L\cap L'= \Big(\emptyset =J_0\subsetneq J_1\subsetneq\cdots\subsetneq J_k\subsetneq J_{k+1}=P\Big)$ НЕ является $\lambda$-допустимой цепью. Цепи $L$ и $L'$ --- $\lambda$-допустимые, поэтому достаточно очевидно, что $L\cap L'$  может не быть $\lambda$-допустимой только в том случае, если некоторая разность соседних идеалов $J_r\setminus J_{r-1}$ цепи $L\cap L'$ содержит элементы $a,b \in P^*$, такие что $\lambda(a) < \lambda(b)$. Предположим, что $F_L\cap F_{L'}$ не пусто, то есть найдётся функция $x$, которая принадлежит как $F_L$, так и $F_{L'}$. Вновь согласно замечанию \ref{xsavesorder}, $x$ --- сохраняющая порядок функция из $\P$ в $\R$, продолжающая $\lambda$. Поэтому $x_a=\lambda(a)$ и $x_b=\lambda(b)$. В чуме $\R$ рассмотрим порядковый идеал $(-\infty;\lambda(a))$, назовём его $I'$. Как мы уже знаем, полный прообраз этого идеала $x^{-1}(I')$ является идеалом чума $\P$ общим для цепей $L$ и $L'$ (то есть $I'\in L\cap L'$). Назовём его $J'$. Идеал $J'$ по построению содержит идеал $J_{r-1}$, содержится в идеале $J_r$ и не совпадает ни с одним из них. Однако мы предполагали, что идеалы $J_r$ и $J_{r-1}$ в цепи $L\cap L'$ --- соседние. Получили противоречие. Поэтому если цепь $L\cap L'$  не является $\lambda$-допустимой цепью, то пересечение $F_L\cap F_{L'}$ пусто.
\end{proof}

На этом доказательство Предложения завершено.
\end{proof}

\begin{example}\label{Fsigma}
Рассмотрим чум из примера \ref{ex1}. Найдём многогранники максимальной размерности комлекса $\K_{\P,\lambda}$. Из доказательства предложения \ref{KP} мы знаем, что они всегда имеют вид $F_{\widetilde{L}_{\sigma}}$ для некоторого линейного расширения $\P_\sigma$ чума $\P$. Так как в нашем случаем в чуме $\P$ всего два несравнимых элемента, то и линейных расширений будет в точности два: $\P_{\sigma_1}$ и $\P_{\sigma_2}$ ( рис.\ \ref{sigma}а и б). Многогранник $F_{L_{\sigma_1}} = F_{L_1}$ из примера \ref{ex1}. Многогранник $F_{L_{\sigma_2}}\subset  \R^5$ состоит из таких точек $x$, что $0=x_r \leq x_p \leq x_q \leq x_t=1 \leq x_s =2$. Проекция $F_{L_{\sigma_2}}$ на $Ox_px_q$ является треугольником (рис.\ \ref{sigma}в).
\end{example}

\begin{figure}[h]
\begin{minipage}[h]{0.31\linewidth}
\center{\includegraphics[width=0.12\linewidth]{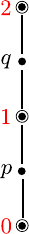} \\ а)}
\end{minipage}
\begin{minipage}[h]{0.31\linewidth}
\center{\includegraphics[width=0.12\linewidth]{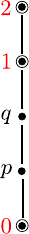} \\ б)}
\end{minipage}
\hfill
\begin{minipage}[h]{0.31\linewidth}
\center{\includegraphics[width=\linewidth]{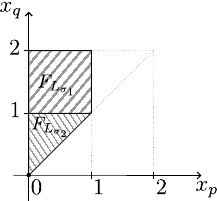} \\ в)}
\end{minipage}
\caption{а) Диаграмма Хассе $\P_{\sigma_1}$; б) диаграмма Хассе $\P_{\sigma_2}$; в) проекция подразбиения $\K_{\P,\lambda}$ многогранника $O(\P,\lambda)$ на плоскость $Ox_px_q$.}
\label{sigma}
\end{figure}

\begin{corollary}\label{subchain}
Многогранник $F_L$ является гранью $F_{L'}$ если и только если $\lambda$-допустимая цепь $L$ содержится в $\lambda$-допустимой цепи $L'$ (то есть цепь $L$ получается из $L'$ удалением некоторого набора идеалов).
\end{corollary}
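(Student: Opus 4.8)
The plan is to deduce both implications from Lemma~\ref{FLcapFL'} together with the injectivity half of Lemma~\ref{LmapstoFL}. The guiding observation is that, inside the polyhedral complex $\K_{\P,\lambda}$, the face relation between two cells $F_L$ and $F_{L'}$ is completely controlled by the combinatorial intersection $L\cap L'$ — the chain of all order ideals of $\P$ lying in both $L$ and $L'$ — and that the condition ``$L$ is obtained from $L'$ by deleting some ideals'' is precisely the statement $L\cap L'=L$.

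\emph{The ``if'' direction.} Assume $L\cap L'=L$; in particular $L\cap L'$ is $\lambda$-admissible. Then the first case of Lemma~\ref{FLcapFL'} applies and gives $F_L\cap F_{L'}=F_{L\cap L'}=F_L$, together with the assertion that $F_{L\cap L'}$ is a face of $F_{L'}$. Hence $F_L$ is a face of $F_{L'}$.

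\emph{The ``only if'' direction.} Assume $F_L$ is a face of $F_{L'}$. Then $F_L\subseteq F_{L'}$, so $F_L\cap F_{L'}=F_L$, and this set is nonempty, being an element of $\K_{\P,\lambda}$. If $L\cap L'$ were not $\lambda$-admissible, the second case of Lemma~\ref{FLcapFL'} would force $F_L\cap F_{L'}=\emptyset$, a contradiction. Therefore $L\cap L'$ is $\lambda$-admissible, and the first case of that lemma yields $F_{L\cap L'}=F_L\cap F_{L'}=F_L$. Since $L\cap L'$ and $L$ are both $\lambda$-admissible chains, the injectivity of the correspondence $L\mapsto F_L$ from Lemma~\ref{LmapstoFL} gives $L\cap L'=L$, i.e.\ $L\subseteq L'$.

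I do not expect a serious obstacle here: the genuine work — identifying $F_L\cap F_{L'}$ with $F_{L\cap L'}$ and detecting when this intersection is empty — was already carried out in Lemma~\ref{FLcapFL'}, so the argument is essentially assembly. The only points to keep in mind are that every $F_L\in\K_{\P,\lambda}$ is nonempty (so the ``empty'' alternative of Lemma~\ref{FLcapFL'} is excluded the moment $F_L$ is known to be a face of $F_{L'}$) and that a cell determines its $\lambda$-admissible chain uniquely. Should one wish to avoid Lemma~\ref{FLcapFL'}, a more hands-on route is available for the ``only if'' part: using convexity of $F_L$ and Lemma~\ref{LmapstoFL}, choose a point $x$ in the relative interior of $F_L$ at which $x$ takes pairwise distinct values on the blocks $I_i\setminus I_{i-1}$ of $L$; then by Remark~\ref{xsavesorder} the ideals of $L$ are exactly the sets $x^{-1}\big((-\infty,r)\big)$, $r\in\R$, and since $x\in F_{L'}$ is constant on the blocks of $L'$, each such set is an ideal of $L'$, whence $L\subseteq L'$.
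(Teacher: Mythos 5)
Your argument is correct and coincides with the paper's own proof: both directions are obtained by reducing to $L\cap L'=L$ and invoking Lemma~\ref{FLcapFL'} (its emptiness alternative to get admissibility of $L\cap L'$, then the identity $F_L\cap F_{L'}=F_{L\cap L'}$ and the face assertion) together with the injectivity of $L\mapsto F_L$ from Lemma~\ref{LmapstoFL}. The alternative ``hands-on'' route you sketch at the end is not in the paper, but the main argument is the same assembly.
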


\begin{proof}
Пусть $F_L$ является гранью $F_{L'}$. Тогда $F_L\cap F_{L'}=F_L$. Таким образом пересечение $F_L\cap F_{L'}$ не пусто. Тогда из леммы \ref{FLcapFL'} следует, что $L\cap L'$ является $\lambda$-допустимой цепью. Применяя снова лемму \ref{FLcapFL'}, получаем, что $F_L\cap F_{L'}=F_{L\cap L'}$. Таким образом, $F_L=F_{L\cap L'}$. Но соответствие $L\mapsto F_L$ является инъективным (см.\ лемму \ref{LmapstoFL}), поэтому $L=L\cap L'$, то есть $L$ содержится в $L'$.

Теперь пусть $L\subset L'$, тогда $L\cap L'=L$. Так как каждая из цепей $L$ и $L'$ по условию является $\lambda$-допустимой, то и $L\cap L'$  является $\lambda$-допустимой цепью. В лемме \ref{FLcapFL'} мы показали, что в этом случае $F_L\cap F_{L'}=F_{L\cap L'}$. Так как $L\cap L'=L$, имеем $F_L\cap F_{L'}=F_L$. Поэтому $F_L$ является гранью $F_{L'}$.
\end{proof}

\begin{corollary}\label{KP_OPlambda}
Полиэдральный комплекс $\K_{\P,\lambda}$ является  подразбиением многогранника $O(\P,\lambda)$.
\end{corollary}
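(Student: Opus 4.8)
The plan is to apply the definition of a subdivision of a convex polytope directly: by Proposition \ref{KP} the family $\K_{\P,\lambda}$ is already a polyhedral complex, so it only remains to check that its body $|\K_{\P,\lambda}|=\bigcup\{F_L : L\ \text{is a $\lambda$-admissible chain}\}$ coincides with $O(\P,\lambda)$ (the extra requirement in the definition of a subdivision of a polytope being automatic). One of the two inclusions comes for free: Remark \ref{xsavesorder} says that every point of every $F_L$ is an order-preserving extension of $\lambda$, hence lies in $O(\P,\lambda)$, so $|\K_{\P,\lambda}|\subseteq O(\P,\lambda)$.

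For the opposite inclusion I would take an arbitrary $x\in O(\P,\lambda)$ and exhibit a $\lambda$-admissible chain $L$ with $x\in F_L$. The natural candidate is the chain of level-set ideals of $x$: let $r_1<\dots<r_k$ be the distinct values taken by the function $x\colon P\to\R$, put $I_0=\emptyset$ and $I_j=\{p\in P : x(p)\le r_j\}$ for $1\le j\le k$. Since $x$ is order-preserving (it belongs to $O(\P,\lambda)$), each $I_j$ is an order ideal: $q\preceq p\in I_j$ gives $x(q)\le x(p)\le r_j$, the same kind of argument already used in the proof of Lemma \ref{FLcapFL'}. Because each value $r_j$ is attained, the inclusions $\emptyset=I_0\subsetneq I_1\subsetneq\dots\subsetneq I_k=P$ are strict, so $L=(I_0\subsetneq\dots\subsetneq I_k)$ is a chain of order ideals running from $\emptyset$ to $P$, i.e.\ a chain of the form appearing in Definition \ref{L-lambda} (with $m=k-1$).

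It then suffices to verify conditions (\ref{condition}) and (\ref{*}), which follow at once from the construction. Indeed $I_j\setminus I_{j-1}=\{p\in P : x(p)=r_j\}$, so $x$ is constant on each difference of consecutive ideals, the chain of values $x(I_1)\le x(I_2\setminus I_1)\le\dots$ is just $r_1<\dots<r_k$, and $x|_{P^*}=\lambda$ because $x\in O(\P,\lambda)$; hence $x\in F_L$ by Definition \ref{FL}. Likewise $\lambda\big((I_j\setminus I_{j-1})\cap P^*\big)$ is contained in $\{r_j\}$, hence empty or a singleton, and these singletons strictly increase with $j$, so (\ref{*}) holds and $L$ is $\lambda$-admissible. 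Therefore $x\in F_L\in\K_{\P,\lambda}$, which gives $O(\P,\lambda)\subseteq|\K_{\P,\lambda}|$ and completes the proof. I do not expect a real obstacle here: the only mildly delicate points are confirming that the level-set preimages $I_j$ are genuinely order ideals (immediate from monotonicity of $x$) and that the resulting chain meets condition (\ref{*}); everything else is straightforward bookkeeping with Definitions \ref{L-lambda} and \ref{FL}.
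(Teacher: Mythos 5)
Your proof is correct, but it takes a somewhat different route from the paper's. The paper proves the inclusion $O(\P,\lambda)\subseteq|\K_{\P,\lambda}|$ by passing through the maximal cells: it identifies each $F_{\widetilde{L}_{\sigma}}$ with the marked order polytope $O(\P_\sigma,\lambda)$ of a $\lambda$-согласованное linear extension $\P_\sigma$, shows $\bigcup_\sigma O(\P_\sigma,\lambda)=O(\P,\lambda)$ by sorting the values of a given $x$ into a chain of inequalities (choosing a linear extension compatible with $x$, which requires breaking ties), and then invokes the fact that the body of a polyhedral complex is the union of its top-dimensional cells. You instead construct directly from $x$ its canonical level-set chain $I_j=\{p: x(p)\le r_j\}$ and verify conditions (\ref{condition}) and (\ref{*}) by hand; this produces the (generally lower-dimensional) cell $F_L$ containing $x$ in its relative interior, avoids the tie-breaking step entirely, and relies only on Definitions \ref{L-lambda} and \ref{FL} plus Remark \ref{xsavesorder} for the reverse inclusion. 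The trade-off is that the paper's detour buys the structural identity $O(\P,\lambda)=\bigcup_\sigma O(\P_\sigma,\lambda)$, which it reuses immediately afterwards to see that $\K_{\P,\lambda}$ is cubosimplicial, whereas your argument is shorter and more self-contained but does not yield that by-product. All the delicate points you flag (that the level sets are order ideals, that the singletons $\lambda((I_j\setminus I_{j-1})\cap P^*)$ strictly increase) are handled correctly.
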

\begin{proof}
Нам нужно показать, что  $|\K_{\P,\lambda}|=O(\P,\lambda)$. 

Пусть $\P_\sigma$ --- $\lambda$-согласованное линейное расширение чума $\P$, и пусть $L_\sigma$ --- соответствующая ему цепь максимальной длины в чуме порядковых идеалов чума $\P$ (см.\ начало доказательства предложения \ref{KP}).
Заметим, что многогранник $F_{\widetilde L_{\sigma}} \in \K_{\P,\lambda}$  (см.\ доказательство предложения \ref{KP}) по построению является мечено-порядковым многогранником $O(\P_\sigma, \lambda)$. Из определений линейного расширения чума и мечено-порядкового многогранника (см. \ref{MOP}) следует, что многогранник $O(\P_\sigma, \lambda)$ высекается из многогранника $O(\P,\lambda)$ набором полупространств, поэтому имеем $\bigcup_{\sigma}O(\P_\sigma, \lambda)\subset O(\P,\lambda)$ (где объединение берётся по всем $\lambda$-согласованным линейным расширениям). Покажем, что верно и обратное включение. Пусть функция $x \in O(\P, \lambda)$. Значения функции $x$ на элементах $P$ можно упорядочить по возрастанию, а затем записать связывающую их цепочку неравенств $x_{q_1} \leq x_{q_2} \leq \dots \leq x_{q_{\# P}}$ (такая цепочка может быть не единственна, если функция $x$ принимает одинаковые значения на нескольких элементах $P$). Так как $x \in O(\P,\lambda)$, то перестановке $q_1,\dots, q_{\#P}$ элементов $P$  соответствует некоторое $\lambda$-согласованное линейное расширение $\P_\sigma$.  Мы заключаем, что $x\in {O(\P_{\sigma},\lambda)}$. 

Для любого полиэдрального комплекса верно, что его тело является объединением полиэдров комплекса максимальной размерности. Поэтому $|\K_{\P,\lambda}|= \bigcup_\sigma F_{\widetilde L_{\sigma}}$. Таким образом, $|\K_{\P,\lambda}|= \bigcup_{\sigma} F_{\widetilde L_{\sigma}}=\bigcup_{\sigma}O(\P_\sigma, \lambda)= O(\P,\lambda)$.

\end{proof} 

\begin{remark}
Полиэдральный комплекс $\K_{\P,\lambda}$ является кубо-симплициальным подразбиением многогранника $O(\P,\lambda)$, так как каждый из многогранников $F_{\widetilde L_{\sigma}}=O(\P_{\sigma},\lambda)$ является произведением симплексов.
\end{remark}

\begin{remark}\label{geometricKPlambda}
Подразбиение $\K_{\P,\lambda}$ мечено-порядкового многогранника можно описать геометрически \cite{LMS}*{proof of Theorem 3.4}. А именно, исходя из всевозможных пар несравнимых элементов $\P$, будем рассекать многогранник $O(\P,\lambda)$ некоторым набором гиперплоскостей. Возможны такие варианты: а) $u,v \in P \setminus P^*$ --- несравнимые элементы, ни один из которых не является отмеченным, тогда проведём гиперплоскость $x_u=x_v$, обозначим её через $H_{uv}$; б) $s \in P\setminus P^*$ и $a \in P^*$ --- несравнимые элементы, один из которых является отмеченным, тогда проведём гиперплоскость $x_s=\lambda(a)$, обозначим её через $H_{as}$; в) $b,c \in P^*$ --- несравнимые элементы, которые оба оказались отмеченными, тогда они не участвуют в дополнительных построениях. В результате мы получим некоторое подразбиение $\K'$ многогранника $O(\P,\lambda)$.

Покажем, что каждый $F_{\widetilde L_{\sigma}}=O(\P_\sigma, \lambda)$ принадлежит $\K'$. Напомним, что в  $\P_\sigma$ зафиксирован порядок между элементами, которые были несравнимы в $\P$. Поэтому каждый многогранник $O(\P_\sigma, \lambda)=O(\P,\lambda)\cap \big(\bigcap_{u,v} H_{uv}^{\gtrless}\big)\cap \big(\bigcap_{a,s}H_{as}^{\gtrless} \big)$, где символ $H_{uv}^{\gtrless}$ означает одно из двух полупространств $x_u \leq x_v$ или $x_u \geq x_v$ в зависимости от установленного порядка между элементами $u$ и $v$ в $\P_\sigma$. Таким образом, $O(\P_\sigma, \lambda) \in \K'$. Принимая во внимание предложение \ref{KP} и следствие \ref{KP_OPlambda}, мы можем заключить, что $\K_{\P,\lambda}=\K'$. 
 
\end{remark}

\begin{remark}
Также в работе  \cite{LMS}*{см.\ \S5.2} для случая мечено-порядковых многогранников, построенных по строго планарному чуму с отмеченными элементами \cite{LMS}*{см.\ опред.\ 4.2}, приведена конструкция, которая позволяет, поэтапно преобразуя исходный чум, получить цепи с отмеченными элементами, мечено-порядковые многогранники которых есть многогранники $F_{\widetilde{L}_{\sigma}}$.
\end{remark}

\section{Вычисление $f$-вектора многогранника $O(\P,\lambda)$}\label{fvector-section}
 
Пусть снова $\P$, $\P^*$ и $\lambda$ --- это участвующие  в определении \ref{MOP} чум, подчум отмеченных элементов и сохраняющая порядок функция из $\P^*$ в $\R$ соответственно. 
В этом разделе мы хотим описать коцепной комплекс $C_{\K_{\P,\lambda}}^*$ в чисто комбинаторных терминах, т.е.\ напрямую в терминах чума $\P$ и сохраняющего порядок 
отображения $\lambda$, без ссылок на какие-либо многогранники. 

Для того, чтобы описать в комбинаторных терминах группы коцепей комплекса $C_{\K_{\P,\lambda}}^*$, сначала опишем в комбинаторных терминах размерность многогранника $F_L$ 
(см.\ определение \ref{FL}) для всякой $\lambda$-допустимой цепи $L$ (см.\ определение \ref{L-lambda}).

\begin{definition}\label{defdimL}
Назовём \textit{размерностью} $\lambda$-допустимой цепи $L=\Big(\emptyset=I_0 \subsetneq I_1 \subsetneq  \cdots \subsetneq I_m\subsetneq I_{m+1}=P\Big)$ количество таких разностей соседних идеалов $I_i\setminus I_{i-1}$, которые не пересекаются с $P^*$. 
Обозначение: $\dim L$.  
\end{definition}

Легко видеть, что $\dim F_L= \dim L$.

Заметим, что  $\dim L+3$ для $\lambda$-допустимой цепи $L$ не превосходит \textit{длины} $L$, то есть количества входящих в $L$ идеалов; равенство достигается, например, если $P^*$ содержит только наименьший и наибольший элементы $\P$ ( в этом случае количество таких разностей соседних идеалов $I_i\setminus I_{i-1}$, которые не пересекаются с $P^*$, равно $m-1$, что в точности на $3$ меньше длины цепи $L$).

Несложно описать в комбинаторных терминах и размерность всего многогранника $O(\P,\lambda)$.

\begin{definition}\label{dimPlambda}
Назовём \textit{размерностью} пары $(\P,\lambda)$ максимум размерностей $\lambda$-допустимых цепей в чуме порядковых идеалов чума $\P$.
Обозначение: $\dim (\P,\lambda)$.  
\end{definition}

Легко видеть, что $\dim (\P,\lambda)= \dim O(\P,\lambda)$.

Для того, чтобы описать в комбинаторных терминах коцепной гомоморфизм комплекса $C_{\K_{\P,\lambda}}^*$, нам нужно сначала научиться для произвольной $\lambda$-допустимой цепи $L$ выписывать все содержащие её $\lambda$-допустимые цепи на единицу большей размерности.

\begin{definition}\label{Lk}
Скажем, что $\lambda$-допустимая цепь $L_k$ получена $I_k$-\textit{уплотнением} из $\lambda$-допустимой цепи $L=\Big(\emptyset=I_0 \subsetneq I_1 \subsetneq  \cdots \subsetneq I_m\subsetneq I_{m+1}=P\Big)$, если $L_k$ получена из $L$ добавлением единственного идеала $J_k$, такого что   
$I_{k-1}\subsetneq J_k\subsetneq I_k$ (см.\ рис.\ \ref{LcapLk}). 
\end{definition}

\begin{figure}[h!]
\includegraphics[width=0.85\linewidth]{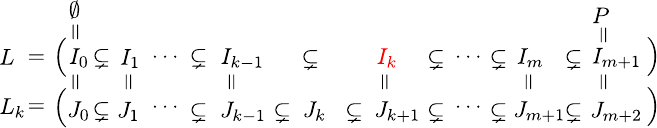}
\caption{$I_k$-Уплотнение $\lambda$-допустимой цепи $L$}
\label{LcapLk}
\end{figure}

Покажем, что всякое $I_k$-уплотнение увеличивает размерность $\lambda$-допустимой цепи ровно на единицу.

\begin{lemma}\label{dimL}
$\dim L_k=\dim L+1$
\end{lemma}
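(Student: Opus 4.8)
The plan is to analyze how an $I_k$-densification affects the count of ``unmarked gaps'' in the definition of $\dim L$ (Definition \ref{defdimL}). Recall that $L_k$ is obtained from $L=\bigl(\emptyset=I_0\subsetneq I_1\subsetneq\cdots\subsetneq I_m\subsetneq I_{m+1}=P\bigr)$ by inserting a single ideal $J_k$ with $I_{k-1}\subsetneq J_k\subsetneq I_k$. Only the $k$-th step of the chain is affected: the single difference $I_k\setminus I_{k-1}$ is replaced by the two differences $J_k\setminus I_{k-1}$ and $I_k\setminus J_k$, both of which are nonempty, and which partition $I_k\setminus I_{k-1}$. All other differences of consecutive ideals are unchanged, so they contribute equally to $\dim L$ and to $\dim L_k$. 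Hence $\dim L_k-\dim L$ equals (the number of the two new differences that miss $P^*$) minus (one if $I_k\setminus I_{k-1}$ missed $P^*$, zero otherwise).

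First I would observe that, since $L_k$ is assumed to be $\lambda$-admissible, condition (\ref{*}) applies to it. The key point is that the set $(I_k\setminus I_{k-1})\cap P^*$ is, by $\lambda$-admissibility of $L$, either empty or has all its $\lambda$-values equal to a single number $t$. I split into two cases. Case 1: $(I_k\setminus I_{k-1})\cap P^*=\emptyset$. Then both $J_k\setminus I_{k-1}$ and $I_k\setminus J_k$ also miss $P^*$, so the one ``unmarked gap'' coming from $I_k\setminus I_{k-1}$ is replaced by two, giving $\dim L_k=\dim L+1$. Case 2: $(I_k\setminus I_{k-1})\cap P^*\neq\emptyset$. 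Here $I_k\setminus I_{k-1}$ contributed nothing to $\dim L$, and I must show exactly one of the two pieces $J_k\setminus I_{k-1}$, $I_k\setminus J_k$ meets $P^*$ while the other misses it. Since $(I_k\setminus I_{k-1})\cap P^*$ is nonempty and is partitioned between the two pieces, at least one piece meets $P^*$. I claim not both do: if $a\in(J_k\setminus I_{k-1})\cap P^*$ and $b\in(I_k\setminus J_k)\cap P^*$, then in the chain $L_k$ the marked elements $a$ and $b$ lie in two distinct consecutive differences, yet $\lambda(a)=\lambda(b)=t$ (both equal the common value on $I_k\setminus I_{k-1}$), contradicting the strict inequality $t_i<t_j$ required by (\ref{*}) for $L_k$. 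Hence exactly one piece meets $P^*$, the other contributes one new ``unmarked gap'', and again $\dim L_k=\dim L+1$.

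The main obstacle — really the only subtle point — is the argument in Case 2 that the two new differences cannot both meet $P^*$; this is exactly where the strict monotonicity clause in (\ref{*}), applied to the admissible chain $L_k$ rather than to $L$, is essential, and it mirrors the reasoning already used in the proof of Lemma \ref{LmapstoFL}. Everything else is bookkeeping: the differences away from index $k$ are literally the same multisets of elements of $P$ in $L$ and in $L_k$, so their contributions cancel in the difference $\dim L_k-\dim L$. Putting the two cases together yields $\dim L_k=\dim L+1$ as claimed.
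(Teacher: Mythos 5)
Ваше доказательство верно и по существу совпадает с доказательством в статье: та же редукция к единственной изменившейся разности $I_k\setminus I_{k-1}$ и тот же разбор двух случаев, причём в случае непустого пересечения с $P^*$ оба доказательства используют $\lambda$-допустимость цепи $L_k$ (условие (\ref{*})), чтобы показать, что все отмеченные элементы попадают ровно в одну из двух новых разностей. Вы лишь чуть подробнее расписываете противоречие со строгой монотонностью $t_i<t_j$.
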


\begin{proof}
 Будем использовать обозначения рис.\ \ref{LcapLk}. Заметим, что $I_k\setminus I_{k-1}= (J_k\setminus J_{k-1})\cup(J_{k+1}\setminus J_k)$. Все остальные разности соседних идеалов у цепей $L$ и $L_k$ совпадают. Возможны два случая. Случай А: $I_{k}\setminus I_{k-1}$ содержит отмеченные элементы $a_1,\dots,a_n$ (все, на которых $\lambda$ принимает одно и то же значение; $n\geq 1$). Так как $L_k$ является по определению $\lambda$-допустимой цепью, то по условию (\ref{*}) либо $J_k\setminus J_{k-1}$ содержит все отмеченные элементы $a_1,\dots,a_n$
 (тогда $J_{k+1}\setminus J_k$ не содержит отмеченных элементов), либо $J_{k+1}\setminus J_k$ содержит все отмеченные элементы $a_1,\dots,a_n$ (тогда $J_k\setminus J_{k-1}$ не содержит отмеченных элементов). Поэтому в цепи $L_k$ ровно на одну больше разностей соседних идеалов, не пересекающихся с $P^*$.
 Случай Б: $I_{k}\setminus I_{k-1}$ не содержит отмеченные элементы. Тогда очевидно $J_k\setminus J_{k-1}$ и $J_{k+1}\setminus J_k$ также не содержат отмеченных элементов. И размерность цепи $L_k$ снова на единицу больше размерности $L$.
\end{proof}
\begin{corollary}\label{dimL+1}
Пусть $L$ и $L'$ --- $\lambda$-допустимые цепи, $L\subset L'$ и $\dim L'=\dim L+1$. Тогда  $L'$ получена из $L$ некоторым $I_k$-уплотнением. 
\end{corollary}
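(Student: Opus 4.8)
The plan is to reduce the Corollary to a single clean identity: for \emph{any} two $\lambda$-admissible chains $L\subseteq L'$ in the poset of order ideals of $\P$ one has
\[
\dim L'-\dim L \;=\; |L'|-|L|,
\]
where $|\cdot|$ denotes the number of ideals in a chain. Granting this, the Corollary is immediate: $\dim L'=\dim L+1$ forces $|L'|=|L|+1$, so $L'$ is obtained from $L$ by adjoining exactly one ideal $J\notin L$; and if $I_{k-1}\subsetneq J\subsetneq I_k$ are the two neighbours of $J$ inside $L'$, then — since every ideal of $L'$ other than $J$ already lies in $L$ — the ideals $I_{k-1}$ and $I_k$ are consecutive in $L$. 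That is precisely the definition of an $I_k$-densification (Definition \ref{Lk}).

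To prove the identity I would write $L=\big(\emptyset=I_0\subsetneq I_1\subsetneq\cdots\subsetneq I_m\subsetneq I_{m+1}=P\big)$ and, for $i=1,\dots,m+1$, let $d_i\ge 0$ be the number of ideals of $L'$ lying strictly between $I_{i-1}$ and $I_i$, so that $|L'|-|L|=\sum_{i=1}^{m+1}d_i$ and in $L'$ the slab $I_i\setminus I_{i-1}$ is cut into $d_i+1$ consecutive ``sub-slabs'' (differences of neighbouring ideals of $L'$). Since $\dim L$ counts the slabs of $L$ disjoint from $P^*$ and $\dim L'$ counts the sub-slabs disjoint from $P^*$, it suffices to check that, for each $i$, the number of sub-slabs inside $I_i\setminus I_{i-1}$ missing $P^*$ exceeds the corresponding count for $L$ by exactly $d_i$. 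There are two cases. If $(I_i\setminus I_{i-1})\cap P^*=\emptyset$, then all $d_i+1$ sub-slabs miss $P^*$, against the single slab of $L$ contributing $1$; difference $d_i$. If $(I_i\setminus I_{i-1})\cap P^*\ne\emptyset$, then by condition (\ref{*}) for $L$ the function $\lambda$ takes a single value $t_i$ on $(I_i\setminus I_{i-1})\cap P^*$; were two distinct sub-slabs of $L'$ inside this slab both to meet $P^*$, condition (\ref{*}) for $L'$ would force the value $t_i$ on both of them, contradicting the required strict increase of the $t_j$'s. Hence exactly one sub-slab meets $P^*$, so $d_i$ of them miss it, against the single slab of $L$ contributing $0$; difference again $d_i$. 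Summing over $i$ yields the identity.

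The only genuinely substantive point is the second case — that subdividing a slab of $L$ which meets $P^*$ cannot create two sub-slabs meeting $P^*$ — and this is exactly the observation already used in the proof of Lemma \ref{dimL} (Case A), here applied to all such slabs simultaneously; everything else is bookkeeping. A technically lighter, but essentially equivalent, alternative would be to induct on $|L'|-|L|$: when $d_i\ge 1$ for some $i$, the argument above shows that one of the new ideals separates a sub-slab disjoint from $P^*$, so deleting it keeps the chain $\lambda$-admissible and drops $\dim$ by $1$, and Lemma \ref{dimL} together with the induction hypothesis finishes the proof. I would expect the direct computation of $\dim L'-\dim L$ to read more cleanly and would present that.
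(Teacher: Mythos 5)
Your proof is correct, and it reaches the conclusion by a genuinely different (though closely related) route than the paper. The paper's proof iterates Lemma \ref{dimL}: it interpolates a tower $L\subsetneq L^1\subsetneq\dots\subsetneq L^{r-1}\subsetneq L'$ adding one ideal at a time, asserts without proof that each intermediate chain is again $\lambda$-admissible (so that each step is an $I_k$-densification), and concludes $\dim L'=\dim L+r$, whence $r=1$. You instead prove the global identity ``$\dim L'-\dim L$ equals the number of added ideals'' in a single pass, counting for each slab $I_i\setminus I_{i-1}$ of $L$ how many of the $d_i+1$ sub-slabs of $L'$ miss $P^*$; the only substantive point is that a slab meeting $P^*$ cannot split into two sub-slabs both meeting $P^*$, since condition (\ref{*}) for $L'$ would force equal labels on them while requiring strict increase --- and this is exactly Case A of Lemma \ref{dimL}. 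So both arguments rest on the same combinatorial observation, but yours buys two things: it dispenses with the intermediate chains entirely, thereby sidestepping the one assertion the paper leaves to the reader (their admissibility --- which, as your Case 2 shows, is verified by the same argument), and it recovers Lemma \ref{dimL} as the special case of one added ideal rather than consuming it as input. Your concluding step --- that a single added ideal necessarily sits strictly between two consecutive ideals of $L$, so that $L'$ is an $I_k$-densification by Definition \ref{Lk} --- is also handled correctly.
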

\begin{proof}
Всякое $I_k$-уплотнение $\lambda$-допустимой цепи --- это  минимально возможное увеличение цепи (ровно на один идеал). Предположим, что  $L\subset L'$ и они отличаются на $r$ идеалов. Несложно понять, что в этом случае существует  последовательность вложенных цепей $L\subsetneq L^1\subsetneq \dots \subsetneq L^{r-1} \subsetneq L'$, такая что каждая следующая цепь получается из предыдущей некоторым $I_k$-уплотнением. Тогда, согласно лемме \ref{dimL}, $\dim L'=\dim L +r$. По условию $r=1$, поэтому $L'$ получена из $L$ некоторым $I_k$-уплотнением.
\end{proof}

\begin{remark}
В общем случае в $\lambda$-допустимой цепи $L$ могут встретиться как идеалы $I_k$, такие что $L$ нельзя $I_k$-уплотнить, так и идеалы, такие что $L$ можно $I_k$-уплотнить более чем одним способом. 
\end{remark}

\begin{definition}\label{conjugatechain}
Будем говорить, что $\lambda$-допустимые цепи $L'$ и $L''$, $I_k$-\textit{сопряжены}, если они получены $I_k$-уплотнением из некоторой $\lambda$-допустимой цепи $L$ и справедливы равенства $J_{k+1}\setminus J_k=J_k'\setminus J_{k-1}'$ и $J_k\setminus J_{k-1}=J_{k+1}'\setminus J_k'$.%
\footnote{Другими словами, разность идеалов $I_k\setminus I_{k-1}$ распадается на две части ($I_k\setminus I_{k-1}=A\cup B$), которые, присоединяясь в разном порядке, приводят к образованию $L'$ и $L''$. Имеем $J_k=I_{k-1}\cup A=I_k\setminus B$ и $J_k'=I_{k-1}\cup B=I_k\setminus A$.}
\end{definition}

Пусть $C^i(\P,\lambda)$ --- векторное пространство над $\Z_2$ с базисом, состоящим из всех $\lambda$-допустимых цепей размерности $i$ 
в чуме порядковых идеалов чума $\P$.
Для $\lambda$-допустимой цепи $L=\Big(\emptyset=I_0 \subsetneq I_1 \subsetneq  \cdots \subsetneq I_m=P\Big)$ положим 
\begin{equation}\label{delta}
\delta(L)=\Sigma_1(L)+\dots+\Sigma_m(L),
\end{equation} 
где $\Sigma_k(L)$ --- формальная сумма всех $I_k$-уплотнений $L$, имеющих $I_k$-сопряжённое. Заметим, что $\delta(L)\in C^{i+1}(\P,\lambda)$, так как всякое $I_k$-уплотнение увеличивает размерность $\lambda$-допустимой цепи на единицу (см.\ лемма \ref{dimL}). Продолжив $\delta$ по линейности, получим линейные отображения $\delta:C^i(\P,\lambda)\to C^{i+1}(\P,\lambda)$.
Положим \[C^*(\P,\lambda)=\Big(C^0(\P,\lambda)\xrightarrow{\delta}\dots\xrightarrow{\delta}C^n(\P,\lambda)\Big), \text{ где } n=\dim(\P,\lambda).\]

\begin{theorem}\label{main}
$C^*(\P,\lambda)$ является коцепным комплексом (т.е.\ $\delta\circ\delta=0$) и изоморфен комплексу $C^*_{\K_{\P,\lambda}}$.
\end{theorem}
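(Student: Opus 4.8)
\emph{Набросок плана.} План состоит в том, чтобы построить явный изоморфизм коцепных комплексов $\Phi\colon C^*(\P,\lambda)\to C^*_{\K_{\P,\lambda}}$: тогда, поскольку $C^*_{\K_{\P,\lambda}}$ по построению является коцепным комплексом (ср.\ определение~\ref{def} и лемму~\ref{lem1}), равенство $\delta\circ\delta=\Phi^{-1}\circ\delta_{\K_{\P,\lambda}}\circ\delta_{\K_{\P,\lambda}}\circ\Phi=0$ получится автоматически, что и докажет теорему. Зададим $\Phi$ на группах коцепей. По лемме~\ref{LmapstoFL} соответствие $L\mapsto F_L$ --- биекция между $\lambda$-допустимыми цепями чума порядковых идеалов чума $\P$ и клетками подразбиения $\K_{\P,\lambda}$; так как $\dim F_L=\dim L$ (см.\ определение~\ref{defdimL} и замечание после него), при каждом $i$ оно ограничивается до биекции между $\lambda$-допустимыми цепями размерности $i$ и $i$-мерными клетками. С другой стороны, согласно замечанию после определения~\ref{def}, группа $i$-мерных коцепей комплекса $C^*_{\K_{\P,\lambda}}$ --- это в точности $C^i(\K_{\P,\lambda};\Z_2)$, то есть пространство над $\Z_2$ с базисом из коцепей $F_L^*$, двойственных $i$-мерным клеткам. Поэтому правило $L\mapsto F_L^*$ задаёт при каждом $i$ линейный изоморфизм $\Phi^i\colon C^i(\P,\lambda)\xrightarrow{\sim}C^i_{\K_{\P,\lambda}}$, и всё сведётся к проверке того, что $\Phi=(\Phi^i)$ перестановочен с кограничными гомоморфизмами (после чего $\Phi$ будет искомым изоморфизмом).

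Чтобы описать кограничный гомоморфизм комплекса $C^*_{\K_{\P,\lambda}}$ геометрически, заметим, что каждая клетка подразбиения $\K_{\P,\lambda}$ лежит в относительной внутренности единственной грани многогранника $O(\P,\lambda)$; обозначим через $A_L$ наименьшую грань $O(\P,\lambda)$, содержащую клетку $F_L$. Базисный вектор $F_L^*$ лежит в том слагаемом $C^*(\A',\partial\A';\Z_2)$ прямой суммы из определения~\ref{def}, которое отвечает грани $A=A_L$, а в этом комплексе относительных клеточных коцепей гомоморфизм $\delta$ двойствен клеточному граничному гомоморфизму; поскольку над $\Z_2$ все коэффициенты инцидентности равны $1$,
\[
\delta_{\K_{\P,\lambda}}(F_L^*)=\sum F_{L'}^*,
\]
где суммирование ведётся по тем $L'$, для которых $F_L$ --- гипергрань (грань коразмерности один) многогранника $F_{L'}$ и при этом $A_{L'}=A_L$. По следствию~\ref{subchain} первое из этих двух условий равносильно тому, что $L\subsetneq L'$ и $\dim L'=\dim L+1$, а тогда по следствию~\ref{dimL+1} цепь $L'$ получена из $L$ некоторым $I_k$-уплотнением. Сравнивая это с формулой~(\ref{delta}) для $\delta$, мы сводим равенство $\Phi\circ\delta=\delta_{\K_{\P,\lambda}}\circ\Phi$ (а с ним и всю теорему) к следующему комбинаторному утверждению: \emph{если $L$ --- $\lambda$-допустимая цепь, а $L_k$ получена из неё $I_k$-уплотнением, то есть добавлением порядкового идеала $J_k$, такого что $I_{k-1}\subsetneq J_k\subsetneq I_k$, то $A_{L_k}=A_L$ в том и только в том случае, когда $L_k$ имеет $I_k$-сопряжённую цепь} (см.\ определение~\ref{conjugatechain}).

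Само это утверждение я бы доказывал так. Наименьшая грань многогранника $O(\P,\lambda)$, содержащая точку $x$, --- это множество тех $y\in O(\P,\lambda)$, для которых $y_p=y_q$ всякий раз, когда $p\precdot q$ и $x_p=x_q$; тем самым она однозначно определяется разбиением $P$ на компоненты связности графа с множеством вершин $P$, рёбра которого --- накрывающие пары, на которых $x$ постоянна. Для $x$ из относительной внутренности $F_L$ (где значения $x$ на разностях $I_1\setminus I_0,\dots,I_{m+1}\setminus I_m$ строго возрастают) эти компоненты получаются разбиением каждой разности $I_i\setminus I_{i-1}$ на куски, связные накрывающими соотношениями внутри самой разности. Положим $A=J_k\setminus I_{k-1}$ и $B=I_k\setminus J_k$. Граф, отвечающий точке из относительной внутренности $F_{L_k}$, отличается от графа, отвечающего точке из относительной внутренности $F_L$, лишь тем, что из него удалены накрывающие рёбра, ведущие из $A$ вверх в $B$ (на $A$ функция принимает строго меньшее значение, чем на $B$, а рёбер из $B$ вниз в $A$ нет, так как $J_k$ --- порядковый идеал). Следовательно, разбиение для $L_k$ --- измельчение разбиения для $L$, и они совпадают в точности тогда, когда накрывающих пар $a\precdot b$ с $a\in A$, $b\in B$ не существует, то есть когда $J_k':=I_{k-1}\cup B=I_k\setminus A$ --- снова порядковый идеал чума $\P$. Наконец, если $J_k'$ --- порядковый идеал, то цепь, полученная из $L$ добавлением $J_k'$, является $\lambda$-допустимой и по определению~\ref{conjugatechain} совпадает с $I_k$-сопряжённой к $L_k$ цепью (а обратно, всякая $I_k$-сопряжённая к $L_k$ цепь получается именно так): её новые разности --- это $B$ и $A$ в указанном порядке, а поскольку и $L$, и $L_k$ удовлетворяют условию~(\ref{*}), хотя бы одно из множеств $A\cap P^*$, $B\cap P^*$ пусто (иначе условие~(\ref{*}) для $L$ давало бы $\lambda(A\cap P^*)=\lambda(B\cap P^*)$, а для $L_k$ --- строгое неравенство между этими значениями); поэтому перестановка разностей $A$ и $B$ условие~(\ref{*}) не нарушает.

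Основную трудность я ожидаю на последнем шаге: аккуратную формулировку того, как наименьшая грань многогранника $O(\P,\lambda)$ кодируется <<графом равенств>> соответствующей точки, и --- прежде всего --- проверку того, что перестановка разностей $A$ и $B$ не нарушает условие~(\ref{*}); именно здесь существенно и несколько тонким образом используется $\lambda$-допустимость сразу обеих цепей $L$ и $L_k$. Описание $\delta_{\K_{\P,\lambda}}$ на втором шаге идейно несложно, но требует внимания к тому, что разложение в прямую сумму из определения~\ref{def} как раз и ограничивает кограничный гомоморфизм на пары клеток с общей наименьшей гранью.
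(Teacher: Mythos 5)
Ваше рассуждение верно, и его общий каркас совпадает с авторским: тот же изоморфизм $\phi\colon L\mapsto F_L$ градуированных пространств, та же редукция равенства $\delta\circ\delta=0$ к коммутативности квадрата с кограничным гомоморфизмом $d$ комплекса $C^*_{\K_{\P,\lambda}}$, то же описание $d(F_L)$ через три условия (грань коразмерности один, размерность на единицу больше, общая наименьшая грань $A$ многогранника $O(\P,\lambda)$) и те же ссылки на следствия \ref{subchain} и \ref{dimL+1}. Различие --- в доказательстве ключевой эквивалентности: <<$\mathring F_L$ и $\mathring F_{L_k}$ лежат в относительной внутренности одной и той же грани $O(\P,\lambda)$ тогда и только тогда, когда $L_k$ имеет $I_k$-сопряжённую цепь>>. В статье это делается геометрически: двигаясь по $\Aff(F_{L_k})$ из $\mathring F_{L_k}$ сквозь $\mathring F_L$, мы в силу условия (\ref{dFL}, в) не покидаем $O(\P,\lambda)$ и попадаем в клетку $F_{L'_k}$, которая по выписанным неравенствам оказывается сопряжённой; при этом используется гиперплоскостное описание $\K_{\P,\lambda}$ из замечания \ref{geometricKPlambda}. Вы же сводите всё к комбинаторике: минимальная грань $O(\P,\lambda)$, содержащая точку, кодируется разбиением $P$ на компоненты связности <<графа равенств>> по накрывающим парам, так что условие $A_{L_k}=A_L$ равносильно отсутствию накрывающих пар из $A=J_k\setminus I_{k-1}$ в $B=I_k\setminus J_k$, то есть тому, что $I_{k-1}\cup B$ --- порядковый идеал, то есть существованию $I_k$-сопряжённой цепи; ваша проверка $\lambda$-допустимости сопряжённой цепи через условие (\ref{*}) (хотя бы одно из множеств $A\cap P^*$, $B\cap P^*$ пусто) корректна. Ваш путь не опирается на замечание \ref{geometricKPlambda}, делает явной обратную импликацию (в статье она оставлена как <<легко видеть>>) и по существу воспроизводит описание решётки граней $O(\P,\lambda)$ в духе Стенли--Пегеля; авторский путь короче за счёт уже проделанной геометрической подготовки и даёт наглядную картину возникновения сопряжённых пар. Отмеченное вами самими тонкое место --- что минимальная грань однозначно определяется графом равенств точки из относительной внутренности клетки --- действительно требует аккуратности, но верно: на относительной внутренности грани набор обращающихся в равенства определяющих неравенств постоянен и однозначно задаёт грань.
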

\begin{proof}
Для начала построим изоморфизм градуированных векторных пространств $\bigoplus_{i=0}^{\dim (\P,\lambda)} C^i(\P,\lambda)$ и $\bigoplus_{i=0}^{\dim O(\P,\lambda)} C^i_{\K_{\P,\lambda}}$ (напомним, что размерность пары $(\P,\lambda)$ совпадает с размерностью многогранника $O(\P,\lambda)$, см.\ определение \ref{dimPlambda}). Как уже упоминалось после определения \ref{def}, группы коцепей комплекса $C^*_{\K_{\P,\lambda}}$ такие же, как у $C^*(\K_{\P,\lambda};\Z_2)$ (действительно, каждая клетка $CW$-комплекса, соответствующего подразбиению выпуклого многогранника, содержится в относительной внутренности единственной грани этого многогранника). Поэтому $C^i_{\K_{\P,\lambda}}$ --- векторное пространство над $\Z_2$ с базисом, состоящим из коцепей, принимающих значение $1$ на внутренностях $\mathring F_L$ всех  $i$-мерных многогранников вида $ F_L$ и ноль на остальных клетках. Далее в нашем рассуждении мы будем отождествлять базисные коцепи  $C^i_{\K_{\P,\lambda}}$ с соответствующими многогранниками $F_L$, если это не отражается на строгости изложения.

Мы знаем, что соответствие $\phi : L\mapsto F_L$ между $\lambda$-допустимыми цепями в чуме порядковых идеалов чума $\P$ и непустыми многогранниками вида $F_L$ является взаимнооднозначным (см.\ лемма \ref{LmapstoFL}), причём $\dim L=\dim F_L$ (см.\ определение \ref{defdimL}). Продолжив $\phi$ по линейности для каждого $i$, получим градуированный изоморфизм 
\[\phi : \bigoplus_{i=0}^{\dim (\P,\lambda)} C^i(\P,\lambda)\to \bigoplus_{i=0}^{\dim O(\P,\lambda)} C^i_{\K_{\P,\lambda}}.\]

Теперь рассмотрим диаграмму:
\begin{equation}
\begin{CD}\label{deltaphi}
C^i(\P,\lambda) @>\delta>> C^{i+1}(\P,\lambda) \\
@V\phi VV@VV\phi V\\
C^i_{\K_{\P,\lambda}} @>d>> C^{i+1}_{\K_{\P,\lambda}}
\end{CD},
\end{equation}
где через $d$ обозначен коцепной гомоморфизм комплекса $C^*_{\K_{\P,\lambda}}$.

Доказав коммутативность (\ref{deltaphi}), то есть справедливость равенства $\phi\circ\delta=d\circ\phi$, мы получим сразу и $\delta\circ\delta=0$, и изоморфизм рассматриваемых коцепных комплексов.

Опишем композицию $\phi\circ\delta$. Пусть $L$ --- $\lambda$-допустимая цепь размерности $i$. Мы определили $\delta(L)=\Sigma_1(L)+\dots+\Sigma_m(L)$, где $\Sigma_k(L)$ --- сумма всех $I_k$-уплотнений $L$, имеющих $I_k$-сопряжённое.  Заметим, что 
$\Sigma_k(L)$ разбивается на пары $I_k$-сопряжённых друг другу цепей: $\Sigma_k(L)=(L_k^1+(L_k^1)')+\dots +(L_k^{l_k}+(L_k^{l_k})')$, где $L_k^j$ и $(L_k^j)'$ являются $I_k$-сопряженными. Поэтому $\delta(L)$ можно переписать в виде: \[\delta(L)=\sum_{k=1}^{m} \big((L_k^1+(L_k^1)')+\dots +(L_k^{l_k}+(L_k^{l_k})')\big).\]
Таким образом, 
\begin{equation}\label{phidelta}
\phi(\delta(L))=\sum_{k=1}^{m} \big((F_{L_k^1}+F_{(L_k^1)'})+\dots +(F_{L_k^{l_k}}+F_{(L_k^{l_k})'})\big).
\end{equation} 
Заметим, что все многогранники из правой части формулы (\ref{phidelta}) имеют размерность $\dim F_L+1$ (так как $\dim F_{L_k^j}=\dim L_k^j$), содержат $F_L$ (см.\ следствие \ref{subchain}) и соответствуют $I_k$-уплотнениям цепи $L$, имеющим $I_k$-сопряжённое для некоторого $1\leq k\leq m$. Более того, все многогранники подразбиения $\K_{\P,\lambda}$ с перечисленными свойствами входят в правую часть формулы (\ref{phidelta}).

Теперь опишем композицию $d\circ\phi$. Пусть снова $L$ --- $\lambda$-допустимая цепь размерности $i$, тогда $\phi(L)=F_L$, где $F_L\in \K_{\P,\lambda}$ и $\dim F_L=i$. Таким образом, $d(\phi(L))=d(F_L)$ и нам остаётся доказать, что $d(F_L)$ совпадает с правой частью формулы (\ref{phidelta}). 

Согласно определению \ref{def}, $d(F_L)$ --- формальная сумма с коэффициентами из $\Z_2$ многогранников $Q\in \K_{\P,\lambda}$, таких что выполнены три условия:
\begin{equation}\label{dFL}
\begin{aligned}
\text{a)}\, &F_L\subset Q\  (\text{другими словами } F_L \text{ является гранью } Q),\\
\text{б)}\, &\dim Q=\dim F_L+1,\hspace{5.5cm}\\
\text{в)}\, &\mathring F_L \text{ и }\mathring Q \text{ лежат в относительной внутренности одной и той же грани } A\\ 
 &\text{многогранника } O(\P,\lambda).
\end{aligned} 
\end{equation}
 
Мы уже знаем из следствий \ref{subchain} и \ref{dimL+1}, что условия (\ref{dFL} а,б) означают, что $Q=F_{L_k}$, где $L_k$ --- $\lambda$-допустимая цепь, которая получена из $L$ некоторым $I_k$-уплотнением. Напомним, что, согласно определению \ref{FL}, $F_L$ состоит из функций $x\in\R^{\P}$, для которых выполнены условия: 

А) $x|_{P^*}=\lambda$, 

Б) $x$ постоянна на множествах $I_1\setminus I_0, \dots, I_{m}\setminus I_{m-1}$ и 

В) $x(I_1) \leq x(I_2\setminus I_1) \leq \cdots \leq x(I_k\setminus I_{k-1})\leq \cdots \leq x(P\setminus I_{m-1})$. 

Согласно определению \ref{Lk}, цепь $L_k$  получена из $L$ добавлением единственного идеала $J_k$, такого что $I_{k-1}\subsetneq J_k\subsetneq I_k$. Соответственно $F_{L_k}$ состоит из функций $x\in\R^{\P}$, для которых выполнены условия: 

А) $x|_{P^*}=\lambda$, 

Б$'$) $x$ постоянна на множествах $I_1\setminus I_0, \dots, J_k\setminus I_{k-1},\  I_k\setminus J_k, \dots, I_{m}\setminus I_{m-1}$ и 

В$'$) $x(I_1) \leq x(I_2\setminus I_1)\leq\cdots \leq x(J_k\setminus I_{k-1})\leq x(I_k\setminus J_k)\leq\cdots \leq x(P\setminus I_{m-1})$. 

Таким образом, система  В$'$) отличается от В) тем, что значения функции $x$ на множествах $J_k\setminus I_{k-1}$ и $I_k\setminus J_k$ не совпадают, а связаны неравенством $x(J_k\setminus I_{k-1})\leq x(I_k\setminus J_k)$. Так как $F_L\subset F_{L_k}$, то и для аффинных оболочек выполнено $\Aff(F_L)\subset \Aff(F_{L_k})$. Заметим, что аффинные оболочки $\Aff(F_L)$ и $\Aff(F_{L_k})$ задаются системами равенств из условий Б) и Б$'$). Пусть $p\in J_k\setminus I_{k-1}$, $q\in I_k\setminus J_k$,  $H_{pq}$ --- гиперплоскость, заданная равенством $x_p=x_q$ и $H_{pq}^{\leq}$ --- полупространство, заданное неравенством $x_p\leq x_q$. Несложно видеть, что $\Aff(F_L)=\Aff(F_{L_k})\cap H_{pq}$ и что  $F_{L_k}\subset\Aff(F_{L_k})\cap H_{pq}^{\leq}$.

Теперь представим, что мы живём в $\mathring F_{L_k}$ и хотим отправиться в путешествие по $\Aff(F_{L_k})$. Согласно геометрическому описанию \ref{geometricKPlambda}, выйдя из $\mathring F_{L_k}$  и пройдя через $\mathring F_L$, мы либо покидаем многогранник $O(\P,\lambda)$, либо попадаем в относительную внутренность некоторой грани $F_{L'_k}$ нашего разбиения, такой что $\Aff F_{L'_k}=\Aff F_{L_k}$. Однако мы знаем, что  $\mathring F_L$ и $\mathring F_{L_k}$ лежат в относительной внутренности одной и той же грани $A$ многогранника $O(\P,\lambda)$ (см.\ условие \ref{dFL} в). Поэтому в рассматриваемом случае, выйдя из $\mathring F_{L_k}$  и пройдя через $\mathring F_L$, мы не покидаем многогранник $O(\P,\lambda)$, а значит  попадаем в относительную внутренность некоторой грани $F_{L'_k}$ нашего разбиения. Несложно показать, что $F_{L'_k}$ состоит из функций $x\in \R^{\P}$, для которых выполнены условия:

А) $x|_{P^*}=\lambda$, 

Б$'$) $x$ постоянна на множествах $I_1\setminus I_0, \dots, J_k\setminus I_{k-1},\  I_k\setminus J_k, \dots, I_{m}\setminus I_{m-1}$ и  

В$''$) $x(I_1) \leq x(I_2\setminus I_1)\leq\cdots \leq x(I_k\setminus J_k)\leq x(J_k\setminus I_{k-1})\leq\cdots \leq x(P\setminus I_{m-1})$.

Действительно, A) выполнено очевидным образом; Б$'$) выполнено, так как $\Aff F_{L'_k}=\Aff F_{L_k}$; В$''$) выполнено, так как $F_{L'_k}\subset\Aff F_{L_k}\cap H_{pq}^{\geq}$ и $F_{L_k}\cap F_{L'_k}=F_L$ ($p\in J_k\setminus I_{k-1}$, $q\in I_k\setminus J_k$ и $H_{pq}^{\geq}$ --- полупространство, заданное неравенством $x_p\geq x_q$).

Заметим, что согласно лемме \ref{LmapstoFL} и определению \ref{conjugatechain}, $L'_k$ и $L_k$ являются $I_k$-сопряженными и для грани $F_{L'_k}$ по построению выполнены условия (\ref{dFL}). Таким образом, мы доказали, что если многогранник $Q\in \K_{\P,\lambda}$ входит в $d(F_L)$ с ненулевым коэффициентом, то $Q=F_{L_k}$, где $L_k$ --- $\lambda$-допустимая цепь, полученная из $L$ некоторым $I_k$-уплотнением и имеющая $I_k$-сопряжённое, а именно $L_k'$. Причём $F_{L_k'}$ также входит в $d(F_L)$ с ненулевым коэффициентом.   

Таким образом, доказано, что все многогранники, входящие в $d(F_L)$ с ненулевым коэффициентом, входят и в правую часть формулы (\ref{phidelta}).
Обратно, пусть $L_k$ --- $\lambda$-допустимая цепь, полученная из $L$ некоторым $I_k$-уплотнением и имеющая $I_k$-сопряжённое (обозначим его через $L_k'$).
Благодаря проделанной работе легко видеть, что $F_{L_k}$ и $F_{L_k'}$ удовлетворяют условиям (\ref{dFL}).
Следовательно $F_{L_k}$ входит в $d(F_L)$ с ненулевым коэффициентом.
\end{proof}

\begin{example}\label{oldnewex}
Изменим немного чум из примера \ref{oplambda}.
Пусть $P=\{r,p,q,s,t\}$. Отношение частичного порядка $\preceq$ зададим накрывающими соотношениями: $r \precdot p \precdot q \precdot s$ (рис.\ \ref{mopmain}а -- крайний левый чум). В качестве $P^*$ выберем трёхэлементное подмножество $\{r, s, t\}$. Сохраняющую порядок функцию $\lambda: \P^* \rightarrow \R$ определим равенствами: $\lambda (r)=0$, $\lambda(s)=2$ и $\lambda(t)=1$ (рис.\ \ref{mopmain}а -- второй слева чум). Согласно определению (\ref{MOP}), 
\[
O(\P,\lambda)=
\begin{cases}
0=x_r \le x_p\le x_q  \le x_s=2,\\
x_t = 1.\\
\end{cases}
\]

Построим проекцию $O(\P,\lambda)$ на плоскость $Ox_px_q$ и проведём прямые $x_p=1$ и $x_q=1$, чтобы получить проекцию подразбиения $\K_{\P,\lambda}$ на ту же плоскость (см.\ замечание \ref{geometricKPlambda}, рис.\ \ref{mopmain}б).

\begin{figure}[h]
\begin{minipage}{0.7\linewidth}
\center{\includegraphics[width=0.8\linewidth]{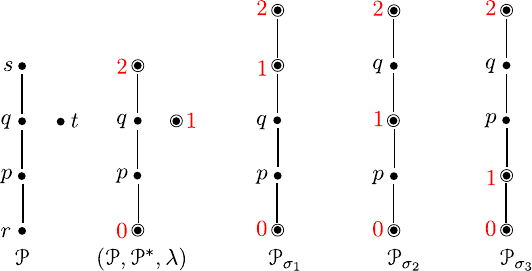} \\ а)}
\end{minipage}
\hfill
\begin{minipage}{0.29\linewidth}
\center{\includegraphics[width=\linewidth]{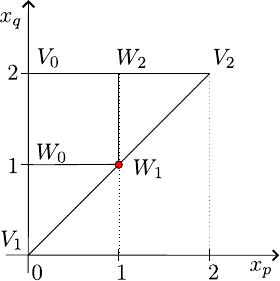} \\ б)}
\end{minipage}
\caption{а) Слева направо: диаграмма Хассе чума $\P$ из примера \ref{oldnewex}; диаграмма Хассе чума $\P$ из примера \ref{oldnewex}, в которой выделены элементы $P^*$ и указаны значения функции $\lambda$ на них; диаграммы Хассе линейных расширений $\P_{\sigma_1}$, $\P_{\sigma_2}$ и $\P_{\sigma_3}$ чума $\P$ из примера \ref{oldnewex}; б) проекция $\K_{\P,\lambda}$ на координатную плоскость $Ox_px_q$ для чума $\P$ из примера \ref{oldnewex}.}
\label{mopmain}
\end{figure} 

Вычислим $f$-вектор $O(\P,\lambda)$ с помощью теоремы \ref{main} и леммы \ref{lem1}. Для этого выпишем коцепной комплекс $C^*(\P,\lambda)$ и вычислим его когомологии. 

Мы знаем, что $C^i(\P,\lambda)$ --- векторное пространство над $\Z_2$ с базисом, состоящим из всех $\lambda$-допустимых цепей размерности $i$ 
в чуме порядковых идеалов чума $\P$. Выпишем базисные $\lambda$-допустимые цепи для каждого $i$.

Начнём с $\lambda$-допустимых цепей старшей размерности. Из доказательства предложения \ref{KP} мы знаем, что такие $\lambda$-допустимые цепи приходят из линейных расширений чума $\P$. В нашем примере $\lambda$-допустимые цепи максимальной размерности  соответствуют линейным расширениям $\P_{\sigma_1}$, $\P_{\sigma_2}$ и $\P_{\sigma_3}$, представленным на рис.\ \ref{mopmain}. Выпишем эти цепи и обозначим соответственно $L_{\sigma_1}$, $L_{\sigma_2}$ и $L_{\sigma_3}$, см.\ таблицу ниже. (Всюду далее при написании $\lambda$-допустимых цепей рассматриваемого чума $\P$ мы на месте отмеченных элементов будем писать значение функции $\lambda$ на них.)

\[\begin{array}{|c|c|c|}
\hline
L & \dim L& F_{L}\\
\hline

L_{\sigma_1}=\Big(\emptyset \neq \{0\} \subsetneq \{0,p\} \subsetneq \{0,p,q\} \subsetneq \{0,p,q,1\} \subsetneq P\Big)& 2 &  V_1W_1W_0: 0\leq x_p \leq x_q \leq 1 \\

\hline


L_{\sigma_2}=\Big(\emptyset \neq \{0\} \subsetneq \{0,p\} \subsetneq \{0,p,1\} \subsetneq \{0,p,1,q\} \subsetneq P\Big)& 2 &W_0W_1W_2V_0: 0 \leq x_p \leq 1 \leq x_q \leq 2\\

\hline


L_{\sigma_3}=\Big(\emptyset \neq \{0\} \subsetneq \{0,1\} \subsetneq \{0,1,p\} \subsetneq \{0,1,p,q\} \subsetneq P\Big)& 2 &W_1V_2W_2: 1 \leq x_p \leq x_q \leq 2\\
\hline
\end{array}\]

Теперь выпишем $\lambda$-допустимые цепи на единицу меньшей размерности, см.\ таблицу ниже.
\[\begin{array}{|c|c|c|}
\hline
L & \dim L& F_{L}\\
\hline

L_1=\Big(\emptyset \neq  \{0,p\} \subsetneq \{0,p,1\} \subsetneq \{0,p,1,q\} \subsetneq P\Big)&1 & W_0V_0: 0 = x_p \leq 1 \leq x_q \leq 2\\

\hline

L_2=\Big(\emptyset \neq   \{0,p\} \subsetneq \{0,p,q\} \subsetneq \{0,p,q,1\} \subsetneq P\Big)& 1 & V_1W_0: 0 = x_p \leq x_q \leq 1 \\

\hline


L_3=\Big(\emptyset \neq \{0\}   \subsetneq \{0,p,q\} \subsetneq \{0,p,q,1\} \subsetneq P\Big)&1 & V_1W_1: 0 \leq x_p = x_q \leq 1 \\

\hline


L_4=\Big(\emptyset \neq \{0\} \subsetneq \{0,1\} \subsetneq  \{0,1,p,q\} \subsetneq P\Big)& 1 & W_1V_2: 1 \leq x_p = x_q \leq 2\\

\hline


L_5=\Big(\emptyset \neq \{0\} \subsetneq \{0,1\} \subsetneq \{0,1,p\}  \subsetneq P\Big)&1 & V_2W_2: 1 \leq x_p \leq x_q = 2\\

\hline


L_6=\Big(\emptyset \neq \{0\} \subsetneq \{0,p\} \subsetneq \{0,p,1\}  \subsetneq P\Big)&1 & W_2V_0: 0 \leq x_p \leq 1 \leq x_q = 2\\

\hline


L_7=\Big(\emptyset \neq \{0\}  \subsetneq \{0,p,1\} \subsetneq \{0,p,1,q\} \subsetneq P\Big)&1 & W_1W_2: 0 \leq x_p = 1 \leq x_q \leq 2\\

\hline


L_8=\Big(\emptyset \neq \{0\} \subsetneq \{0,p\} \subsetneq  \{0,p,q,1\} \subsetneq P\Big)&1 & W_1W_0: 0 \leq x_p \leq x_q = 1 \\

\hline
\end{array}\]

Осталось выписать нульмерные $\lambda$-допустимые цепи. Сделаем это.

\[\begin{array}{|c|c|c|}
\hline
L & \dim L& F_{L}\\

\hline


L_{v_0}=\Big(\emptyset \neq  \{0,p\} \subsetneq \{0,p,1\} \subsetneq  P\Big)&0 & V_0: 0 = x_p \leq 1 \leq x_q = 2\\

\hline

L_{v_1}=\Big(\emptyset \neq  \{0,p,q\} \subsetneq \{0,p,q,1\} \subsetneq P\Big)& 0 & V_1: 0 =x_p = x_q \leq 1 \\

\hline


L_{v_2}=\Big(\emptyset \neq \{0\} \subsetneq \{0,1\} \subsetneq  P\Big)& 0 & V_2: 1 \leq x_p = x_q = 2\\

\hline

L_{w_0}=\Big(\emptyset \neq \{0,p\} \subsetneq \{0,p,q,1\} \subsetneq P\Big)& 0 & W_0: 0 = x_p \leq x_q = 1 \\

\hline

L_{w_1}=\Big(\emptyset \neq \{0\} \subsetneq \{0,p,q,1\} \subsetneq P\Big)& 0 & W_1: 0 \leq x_p = x_q = 1 \\

\hline


L_{w_2}=\Big(\emptyset \neq \{0\}  \subsetneq \{0,p,1\} \subsetneq  P\Big)& 0 & W_2: 0 \leq x_p = 1 \leq x_q = 2\\

\hline
\end{array}\]

Таким образом, 
$$C^2(\P,\lambda)=\langle L_{\sigma_1}, L_{\sigma_2}, L_{\sigma_3}\rangle;$$
$$C^1(\P,\lambda)=\langle L_1, L_2, L_3, L_4, L_5, L_6, L_7, L_8\rangle;$$
$$C^0(\P,\lambda)=\langle L_{v_0}, L_{v_1},L_{v_2}, L_{w_0},
L_{w_1}, L_{w_2} \rangle.$$

Теперь вычислим коцепной гомоморфизм на базисных элементах групп $C^i(\P,\lambda)$. Будем действовать согласно правилу (\ref{delta}).

Рассмотрим $\lambda$-допустимую цепь $L_{v_0}=\Big(\emptyset \neq  \{0,p\} \subsetneq \{0,p,1\} \subsetneq  P\Big)$. Мы можем $I_1$-уплотнить $L_{v_0}$ единственным способом: получим $L_6=\Big(\emptyset \neq \{0\} \subsetneq \{0,p\} \subsetneq \{0,p,1\}  \subsetneq P\Big)$, а также $I_3$-уплотнить $L_{v_0}$ единственным способом: получим $L_1=\Big(\emptyset \neq  \{0,p\} \subsetneq \{0,p,1\} \subsetneq \{0,p,1,q\} \subsetneq P\Big)$. Таким образом, ни одно из $I_k$-уплотнений $L_{v_0}$ не имеет $I_k$-сопряжённое, а значит $\delta^0(L_{v_0})=0$.

Рассмотрим $\lambda$-допустимую цепь $L_{w_0}=\Big(\emptyset \neq \{0,p\} \subsetneq \{0,p,q,1\} \subsetneq P\Big)$. Вновь мы можем $I_1$-уплотнить рассматриваемую цепь только единственным образом: получим $L_8=\Big(\emptyset \neq \{0\} \subsetneq \{0,p\} \subsetneq  \{0,p,q,1\} \subsetneq P\Big)$, однако возможны два различных $I_2$-уплотнения. Сведения об $I_2$-уплотнениях $L_{w_0}$ занесём в небольшую таблицу:

\[\begin{array}{|c|c|c|}
\hline
\text{$I_2$-уплотнения $L_{w_0}=\Big(\emptyset \neq \{0,p\} \subsetneq \{0,p,q,1\} \subsetneq P\Big)$}
& J_2\setminus J_1& J_3\setminus J_2\\
\hline
L_1=\Big(\emptyset \neq  \{r,p\} \subsetneq \{r,p,1\} \subsetneq \{r,p,t,q\} \subsetneq P\Big)
&\{1\}&\{q\}\\
\hline
L_2=\Big(\emptyset \neq   \{0,p\} \subsetneq \{0,p,q\} \subsetneq \{0,p,q,1\} \subsetneq P\Big)
&\{q\}&\{1\}\\
\hline
\end{array}\]

Таким образом, $L_1$ и $L_2$, согласно определению \ref{conjugatechain}, являются $I_2$-сопряжёнными, поэтому $\delta^0(L_{w_0})=L_1+L_2$.

Рассмотрим $\lambda$-допустимую цепь $L_1=\Big(\emptyset \neq  \{0,p\} \subsetneq \{0,p,1\} \subsetneq \{0,p,1,q\} \subsetneq P\Big)$. Мы можем $I_1$-уплотнить $L_1$ единственным способом: получим $L_{\sigma_2}=\Big(\emptyset \neq \{0\} \subsetneq \{0,p\} \subsetneq \{0,p,1\} \subsetneq \{0,p,1,q\} \subsetneq P\Big)$. Таким образом, $L_1$ имеет единственное $I_k$ уплотнение и $\delta^1(L_1)=0$.

Рассмотрим $\lambda$-допустимую цепь $L_7=\Big(\emptyset \neq \{0\}  \subsetneq \{0,p,1\} \subsetneq \{0,p,1,q\} \subsetneq P\Big)$. Мы можем $I_2$-уплотнить её двумя различными способами. Сведения об $I_2$-уплотнениях $L_7$ занесём в небольшую таблицу:

\[\begin{array}{|c|c|c|}
\hline
\text{$I_2$-уплотнения $L_7=\Big(\emptyset \neq \{0\}  \subsetneq \{0,p,1\} \subsetneq \{0,p,1,q\} \subsetneq P\Big)$}
& J_2\setminus J_1& J_3\setminus J_2\\
\hline
L_{\sigma_2}=\Big(\emptyset \neq \{0\} \subsetneq \{0,p\} \subsetneq \{0,p,1\} \subsetneq \{0,p,1,q\} \subsetneq P\Big)
&\{p\}&\{1\}\\
\hline
L_{\sigma_3}=\Big(\emptyset \neq \{0\} \subsetneq \{0,1\} \subsetneq \{0,1,p\} \subsetneq \{0,1,p,q\} \subsetneq P\Big)
&\{1\}&\{p\}\\
\hline
\end{array}\]

Таким образом, $L_{\sigma_2}$ и $L_{\sigma_3}$, согласно определению \ref{conjugatechain}, являются $I_2$-сопряжёнными, поэтому $\delta^1(L_7)=L_{\sigma_2}+L_{\sigma_3}$.

Для остальных базисных элементов групп $C^i(\P,\lambda)$ вычисления можно выполнить аналогично. Читателю предоставляется возможность провести их самостоятельно  и сравнить полученные результаты с теми, которые выписаны в таблице: 

\[\begin{array}{|c|c|c|c|c|c|}
\hline
C^0(\P,\lambda)&C^1(\P,\lambda)&C^2(\P, \lambda)&\delta^0&\delta^1\\
\hline
L_{v_0} &L_1&L_{\sigma_1}&\delta^0(L_{v_0})=0&\delta^1(L_1)=0\\
\hline
 L_{v_1} &L_2&L_{\sigma_2}&\delta^0(L_{v_1})=0&\delta^1(L_2)=0\\
\hline
 L_{v_2} &L_3&L_{\sigma_3}&\delta^0(L_{v_2})=0& \delta^1(L_3)=0\\
\hline
L_{w_0} &L_4&-&\delta^0(L_{w_0})=L_1+L_2&\delta^1(L_4)=0\\
\hline
L_{w_1} &L_5&-&\delta^0(L_{w_1})=L_3+L_4&\delta^1(L_5)=0\\
\hline
 L_{w_2} &L_6&-&\delta^0(L_{w_2})=L_5+L_6&\delta^1(L_6)=0\\
\hline
-&L_7& -&-&\delta^1(L_7)=L_{\sigma_2}+L_{\sigma_3}\\
\hline
-&L_8&-&-&\delta^1(L_8)=L_{\sigma_1}+L_{\sigma_2}\\
\hline
\end{array}\]

\[
H^0(C^*(\P,\lambda))\simeq \ker \delta^0\simeq \langle L_{v_0}, L_{v_1}, L_{v_2}\rangle \simeq \Z_2^3,
\]
\[
H^1(C^*(\P,\lambda)) \simeq\langle  L_1,\dots, L_6\rangle/\langle L_1+L_2,\ L_3+L_4,\ L_5+L_6\rangle \simeq \Z_2^3,
\]
\[
H^2(C^*(\P,\lambda))\simeq\langle L_{\sigma_1},L_{\sigma_2}, L_{\sigma_3} \rangle/\langle L_{\sigma_1}+L_{\sigma_2},\ L_{\sigma_2}+L_{\sigma_3} \rangle \simeq \Z_2.
\]
Согласно лемме \ref{lem1} и теореме \ref{main}  
 \[f_0(O(\P,\lambda))=\dim H^0(C^*(\P,\lambda))=3,\]
 \[f_1(O(\P,\lambda))=\dim H^1(C^*(\P,\lambda))=3,\]
 \[f_2(O(\P,\lambda))=\dim H^2(C^*(\P,\lambda))=1.\]

В завершении данного примера заметим, что коцепной комплекс $C^*_{\K_{\P,\lambda}}$ для рассматриваемого многогранника $O(\P,\lambda)$ мы выписали в примере \ref{firstexample}. Несложно видеть, что коцепные комплексы примеров \ref{firstexample} и \ref{oldnewex} связывает изоморфизм, построенный в доказательстве теоремы \ref{main}.
\end{example}

\section*{Благодарности}

Я признательна моему научному руководителю В.~А.~Тиморину за общую постановку задачи, постоянное внимание и поддержку, М.~Э.~Казаряну, В.~А.~Кириченко, С.~К.~Ландо и А.~И.~Эстерову за интерес к работе. Я очень благодарна моему мужу Серёже Мелихову за поддержку и веру в меня. Также хотелось бы поблагодарить рецензента за полезные замечания.


\end{document}